\documentclass[hidelinks,onefignum,onetabnum]{siamart220329}

\usepackage[T1]{fontenc}
\usepackage{amsmath,bm}
\usepackage{amssymb,mathtools,esint}
\usepackage[english]{babel}

\usepackage[textsize=tiny, textwidth=2cm]{todonotes}
\setlength{\marginparwidth}{2cm}

% PACKAGES %%%%%%%%%%%%%%%%%%%%%%%%%%%%%%%%%%%%%%%%%%%%%
\usepackage{csquotes}
\usepackage{graphicx}
\usepackage{enumitem}
\usepackage{booktabs} 			% Really nice tables
\usepackage{multirow} 			% Combine multiple rows
\usepackage{parskip} 			% Neuen Absatz nicht einruecken
\usepackage{cancel}

\newcommand{\dom}{\Omega}
\renewcommand{\i}{\imath}

\newcommand{\Ecut}{{\rm E}_{\rm cut}}
\newcommand{\Ecutref}{{\rm E}_{\rm cut,ref}}
\newcommand{\Ha}{{\rm Ha}}

\newcommand{\tr}{{\rm Tr}}
\newcommand{\per}{{\rm per}}
\newcommand{\Ran}{{\rm Ran}}
\newcommand{\Ker}{{\rm Ker}}
\newcommand{\loc}{{\rm loc}}
\newcommand{\Res}{{\rm Res}}

 % Identity:
 % residual

\newcommand{\trvo}{\underline{\rm Tr}}
\newcommand{\Evo}{\underline{E}}

\newcommand{\norm}[1]{\lVert #1\rVert}

\newcommand{\N}{\mathbb{N}}
\newcommand{\Z}{\mathbb{Z}}

\newcommand{\R}{\mathbb{R}}
\newcommand{\C}{\mathbb{C}}

\newcommand{\Ne}{{N_{\rm el}}}
\newcommand{\Ec}{{\Ecut}}

\newcommand{\cD}{\mathcal{D}}
\newcommand{\cE}{\mathcal{E}}

\newcommand{\cG}{\mathfrak{G}}
\newcommand{\cH}{\mathcal{H}}

\newcommand{\cM}{\mathcal{M}}
\newcommand{\cMvo}{\underline{\mathcal{M}}}

\newcommand{\cR}{\mathcal{R}}
\newcommand{\cZ}{\mathcal{Z}}
\newcommand{\cB}{\mathcal{B}}
\newcommand{\cS}{\mathcal{S}}

\newcommand{\cV}{\mathcal{V}}

\newcommand{\cW}{\mathcal{W}}

\newcommand{\fD}{\mathfrak{D}}
\newcommand{\fQ}{\mathfrak{Q}}

\newcommand{\bfG}{{\bm G}}

\newcommand{\bfx}{{\bm x}}

\makeatletter
\newcommand{\pseudoeqref}[1]{\textup{\tagform@{#1}}}
\makeatother

\newsiamremark{assumption}{Assumption}
\newsiamremark{remark}{Remark}

%Notation macros for the bounds
\newcommand{\etaene}[1]{\eta_{{}#1}}

\newcommand{\err}{\texttt{err}}

\title{Fully guaranteed and computable error bounds on the energy for periodic Kohn--Sham equations with convex density functionals~\thanks{This publication is part of a project that has received funding from the European
Research Council (ERC) under the European Union’s Horizon 2020 Research and Innovation Programme – Grant Agreement EMC2 No. 810367. GD
was supported by the French ‘Investissements d’Avenir’ program, project Agence
Nationale de la Recherche (ISITE-BFC) (contract ANR-15-IDEX-0003). GD was also supported by the Ecole des Ponts-ParisTech and region Bourgogne Franche-Comt\'e.
RL and BS acknowledge support from the Deutsche Forschungsgemeinschaft (DFG, German Research Foundation) under project 516782692. We thank the Deutsche Forschungsgemeinschaft (DFG, German Research Foundation) for supporting this work by funding - EXC2075 – 390740016 under Germany’s Excellence Strategy. We acknowledge the support by the Stuttgart Center for Simulation Science (SimTech).
}}

\author{Andrea Bordignon~\footnotemark[1]
\and{Eric Canc\`es~\footnotemark[1]}
\and{Genevi\`eve Dusson~\footnotemark[2]}
\and{Gaspard Kemlin~\footnotemark[3]}
\and{Rafael Antonio Lainez Reyes~\footnotemark[4]}
\and{Benjamin Stamm~\footnotemark[4]}}

\footnotetext[1]{Universit\'e Paris Est, CERMICS, Ecole des Ponts and
INRIA, 6 \& 8 Av. Pascal, 77455 Marne-la-Vall\'ee, France, \texttt{bordignon@cermics.enpc.fr}, \texttt{cances@cermics.enpc.fr}.}

\footnotetext[2]{Universit\'e de Franche-Comt\'e, CNRS, LmB, 25000 Besan\c con, France, \texttt{genevieve.dusson@math.cnrs.fr}.}

\footnotetext[3]{LAMFA, UMR CNRS 7352, Universit\'e de Picardie Jules Verne, 80039 Amiens, France, \texttt{gaspard.kemlin@u-picardie.fr}.}

\footnotetext[4]{IANS-NMH, University of Stuttgart, 70569 Stuttgart, Germany, \texttt{rafael-antonio.lainez-reyes@ians.uni-stuttgart.de}, \texttt{benjamin.stamm@ians.uni-stuttgart.de}.}

%%%%%%%%%%%%%%%%%%%%%%%%%%%%%%%%%%%%%%%%%%%%%%%%%%%%%%%%%%%%%%%%%
\begin{document}

\maketitle

\begin{abstract}
In this article, we derive fully guaranteed error bounds for the energy of convex non-linear mean-field models. These results apply in particular to Kohn--Sham equations with convex density functionals, which includes the reduced Hartree--Fock (rHF) model, as well as the Kohn--Sham model with exact exchange-density functional (which is unfortunately not explicit and therefore not usable in practice).
We then decompose the obtained bounds into two parts, one depending on the chosen discretization and one depending on the number of iterations performed in the self-consistent algorithm used to solve the nonlinear eigenvalue problem, paving the way for adaptive refinement strategies.
The accuracy of the bounds is demonstrated on a series of test cases, including a Silicon crystal and an Hydrogen Fluoride molecule simulated with the rHF model and discretized with planewaves. We also show that, although not anymore guaranteed, the error bounds remain very accurate for a Silicon crystal simulated with the Kohn--Sham model using nonconvex exchange-correlation functionals of practical interest.
\end{abstract}

\section{Introduction}
Ab initio simulations within the Born--Oppenheimer approximation~\cite{Born1927-ap} are performed routinely for simulating molecular and materials systems in several fields including condensed matter physics, chemistry and materials science.
The fact that the equations at stake do not depend on empirical parameters except a few fundamental constants of physics make them very appealing for systematic and accurate simulations.
A typical problem in this field, on which we will focus in this article, is the problem of finding the electronic ground state of the considered system, that is the state of lowest energy for the electrons, the nuclei (considered as point-like particles) being fixed at some given positions.
Among many models used in ab initio simulations, Density Functional theory (DFT) and especially Kohn--Sham models~\cite{Kohn1965} are the most popular ones, as they offer a good compromise between accuracy and computational cost. From a mathematical perspective, the Kohn--Sham models consist of a partial differential equation in the form of a nonlinear eigenvalue problem, as the corresponding differential operator, that needs to be diagonalized, depends on the eigenfunctions themselves.

\newpage

The numerical computations of solutions to the Kohn--Sham equations requires first to choose a discretization space, that is a finite-dimensional space on which the eigenvalue equations are projected and then solved.
Standard discretization methods for DFT equations include planewaves, linear combination of atomic orbitals (LCAO) or finite elements.
Once the discretization space is chosen, the nonlinear equations have to be solved, usually with a fixed-point algorithm called self-consistent field (SCF) algorithm, which is stopped when a given tolerance criterium is met.
Finally, in the SCF algorithm, a linear matrix eigenvalue problem is solved at each iteration, often resorting to an iterative linear algebra solver also stopped after a finite number of iterations.
Each of these steps leads to approximations, and it is important to estimate them for two reasons.
First, obtaining guaranteed bounds for the error between the exact and computed solutions allows to certify the accuracy of the computed solutions. Second, quantifying the size of the error coming from each approximation allows to optimize the parameters of the simulation in order to reduce the global computational cost necessary to reach a final total accuracy.

The purpose of this article is to derive fully guaranteed and computable \emph{a posteriori} energy bounds for DFT models with convex density functionals using a planewave discretization,
as well as to propose a decomposition of the total error between two contributions: one standing for the nonlinear solver error and the other for the discretization error. In this first contribution, we do not (yet) account for the error coming from the iterative eigenvalue solver of the successive (linear) matrix eigenvalue problems. Note that our results could be applied in theory to the Kohn--Sham model with exact exchange-density functional, since the exact density functional is convex (in the Vallone-Lieb version of DFT \cite{Valone1980,liebDensityFunctionalsCoulomb1983}). However, as the exact exchange-correlation functional has no known explicit expression, our bounds are useless in practice for the exact density functional. On the other hand, they can be applied to the reduced Hartree--Fock (rHF) model, i.e. to the Kohn--Sham model with exchange-correlation terms set to zero. In addition, they can be used in practice for usual density-functional approximations, namely LDA, GGA, etc. (see e.g. \cite{Toulouse2023} for a review on currently available approximate exchange-correlation functionals).
Although they are no longer guaranteed, numerical simulations indicate that our
bounds give a fairly good approximation of the actual error. Finally, we should mention that in periodic problems arising in condensed matter physics, the density is bounded away from zero. This physical property can be easily established with full mathematical rigor for Kohn--Sham LDA with PW92 exchange-correlation functional~\cite{perdewAccurateSimpleAnalytic1992}. In this framework, the energy functional is locally $\mathcal C^2$ in the neighborhood of any local minimizer. It is then reasonable to make the assumption that the Hessian is non-degenerate, hence that the energy functional is locally strongly convex in the neighborhood of the local minimizer under consideration, enabling extension to the present work to this case.

\medskip

Let us now put our work into perspective. The {\it a posteriori} error estimation of elliptic boundary value problems is already well developed, see e.g.~\cite{pragerApproximationsElasticityBased1947,braessEquilibratedResidualError2009,destuynderExplicitErrorBounds1999, ernPolynomialDegreeRobustPosterioriEstimates2015,ladevezeErrorEstimateProcedure1983,yanGradientRecoveryType2001}. Regarding eigenvalue problems,
the computation of guaranteed error
bounds were first proposed by  Kato \cite{katoUpperLowerBounds1949}, Forsythe
\cite{forsytheAsymptoticLowerBounds1954}, Weinberger
\cite{weinbergerUpperLowerBounds1956} or Bazley and Fox
\cite{bazleyLowerBoundsEigenvalues1961}, and more recently \emph{e.g.}\ in~\cite{cancesGuaranteedRobustPosteriori2017,cancesGuaranteedRobustPosteriori2018,carstensenGuaranteedLowerBounds2014,duranPosterioriErrorEstimates2003,
huLowerBoundsEigenvalues2014,huLowerUpperBound2014,larsonPosterioriPrioriError2000,
liuFrameworkVerifiedEigenvalue2015,luoComputingLowerUpper2012}, see also~\cite{nakaoNumericalVerificationMethods2019a} for a recent monograph on
the subject, and \cite{DUSSON2023112352} for a recent review. These error bounds are however only valid for single eigenvalues, while in practice,  degenerate or
near-degenerate eigenvalues often appear in electronic structure calculations. \emph{A posteriori} error estimates
for conforming approximations of eigenvalue clusters of second-order
self-adjoint elliptic operators with compact resolvent have more recently been proposed in
\cite{cancesGuaranteedPosterioriBounds2020a}, as well as in the context of electronic structure
calculations for linear eigenvalues equations in~\cite{herbstPosterioriErrorEstimation2020}.
When it comes to nonlinear eigenvalue equations as the ones of interest in this article, no guaranteed error bounds have been published to our knowledge, apart from the plane-wave discretization of a toy Gross--Pitaevskii equation in one-dimension~\cite{dussonPosterioriAnalysisNonlinear2017}. However, asymptotic error bounds can be found in \emph{e.g.}~\cite{Maday2003-eb} for the Hartree--Fock equations. For Kohn--Sham DFT, practical, not guaranteed, error bounds on the energy and the density matrix have been proposed for various discretization methods, notably planewave~\cite{Cances2016-vy,dussonPostprocessingPlanewaveApproximation2021}, and finite elements~\cite{Chen2014-qu,Lin2016-fn,Motamarri2012-fd,yangEigenfunctionBehaviorAdaptive2021,Zhang2017-ok}. Let us also mention the recent work \cite{cancesPracticalErrorBounds2022}, which includes practical bounds on quantities of interest other than the energy and the density matrix, such as the interatomic forces.

\medskip

The rest of the article is organized as follows. In Section~\ref{sec:2}, we present the model of interest together with the discretization of the equations and the self-consistent field (SCF) algorithm used to solve the discretized equations in practice. We then derive the proposed guaranteed bounds on the considered energy in Section~\ref{sec:3}.
Finally, we present numerical results on a set of test systems in Section~\ref{sec:4}, demonstrating the accuracy of the error bound on the energy for convex density functionals and showing that the bounds also provide very accurate error estimates in the case of nonconvex approximate density functionals used in practice.

\section{Problem setting, discretization, and practical resolution} \label{sec:2}

In this section we introduce the model of interest using an abstract framework, similar to ~\cite{cancesPracticalErrorBounds2022,cancesConvergenceAnalysisDirect2021a}. Our description, formulated using a \emph{density matrix formalism}, allows us to cover different models such as the (spinless) Kohn--Sham models,  the Hartree--Fock model, or the stationary Gross--Pitaevskii equation.

\subsection{Functional setting}

Let $\Omega\subset\R^{3}$ denote the unit cell of an arbitrary periodic lattice \(\cR\), with dual lattice \(\cR^*\).  For $p>0$, we denote by $L^p_{\per}(\Omega)$ the space of complex-valued, $p$-integrable $\cR$-periodic functions
\begin{equation}
    L^p_{\per}(\Omega)=\{u\in L^p_{\loc}(\R^3;\C): u\text{ is }\cR\text{-periodic}\}.
\end{equation}
In particular, the space $\cH \coloneqq L^2_{\per}(\Omega)$ with inner product $\langle \cdot, \cdot \rangle$ admits an orthonormal basis consisting of planewaves:
\begin{equation}
    e_\bfG:\bfx\in\R^3\mapsto |\Omega|^{-\frac{1}{2}}e^{\i\bfG\cdot\bfx},\;\bfG\in\cR^*.
\end{equation}
For $s\in\R$, the $\cR$-periodic Sobolev   space of order $s$ is then defined as
\begin{equation}
    H^s_\per(\Omega)\coloneqq\left\lbrace u(\bfx)=\sum_{\bfG\in\cR^*}\hat{u}_\bfG e_{\bfG}(\bfx):\ \sum_{\bfG\in\cR^*}\Big(1+\frac{|\bfG|^2}2\Big)^s|\hat{u}_\bfG|^2<\infty
    \right\rbrace,
\end{equation}
where $\hat u_\bfG$ represents the Fourier coefficient
\begin{equation}
    \hat{u}_\bfG = \langle e_\bfG,u\rangle = |\Omega|^{-\frac{1}{2}}\int_\Omega u(\bfx)e^{-\i\bfG\cdot \bfx}{\rm d}\bfx.
\end{equation}
Endowed with the inner product
\begin{equation}
    \langle u,v \rangle_{H^s_{\per}(\Omega)}\coloneqq\sum_{\bfG\in\cR^*}\Big(1+\frac{|\bfG|^2}2\Big)^s\hat{u}_\bfG^*\hat{v}_\bfG,\quad \forall u,v\in H_{\per}^s(\Omega),
\end{equation}
the space \(H^s_\per(\Omega)\) is a Hilbert space. We denote by $L^p_\per(\Omega;\R)$ and $H^s_\per(\Omega;\R)$ the spaces of real-valued functions in $L^p_\per(\Omega)$ and $H^s_\per(\Omega)$ respectively.

\subsection{Traces of operators and density matrices}

Let us now recall the definition of the trace of an operator. Suppose $A$ is a bounded, positive linear operator on $\cH$, then the \emph{trace} of $A$ is equal to
\begin{equation}
    \tr(A) \coloneqq \sum_{i\in\N} \langle \psi_i,A\psi_i\rangle \; \in \R_+ \cup \{+\infty\},
\end{equation}
where $(\psi_i)_{i\in\N}$ is any orthonormal basis of $\cH$. A bounded linear operator is \emph{trace class} if $\tr(|A|)<+\infty$ where $|A|:=(A^*A)^{1/2}$. For more details on trace-class operators and the underlying functional analysis setting, we refer to \cite{reedAnalysisOperators1978}, \cite[Section 2.2]{cancesGuaranteedPosterioriBounds2020a}, and references therein.

More generally, let us explain how to define $\tr(AB)$, in the case where (i) $A$ is a linear, self-adjoint, bounded from below, operator with domain $\fD(A)$ and form domain $\fQ(A)$, and (ii) $B$ is a finite-rank operator such that $\Ker(B)^{\perp}\subset\fQ(A), \Ran(B)\subset\fQ(A)$. First we write $B$ in canonical form, that is
\begin{align}
    B=\sum_{i=1}^{r}\sigma_i |\phi_i\rangle\langle\psi_i|,
\end{align}
where $r \in \N$ is the rank of $B$, $0<\sigma_r \le \cdots \le \sigma_1$ the singular values of $B$, $\phi_i\in \fQ(A)$, $\psi_i \in \fQ(A)$, and $( \phi_i)_{1 \le i \le r}$ and $(\psi_i)_{1 \le i \le r}$ are orthonormal with respect to the $L^2_\per(\Omega)$-inner product. Then we define
\begin{align}\label{def: weak trace}
\tr(AB)\coloneqq\sum_{i=1}^{r}\sigma_i\langle\psi_i,A\phi_i\rangle.
\end{align}
It is easy to check that this definition does not depend on the chosen canonical decomposition of $B$ and obviously coincides with the usual trace when $A$ is a bounded operator.

\medskip

In this work, we will focus on the set \(\cM\) of  finite-energy \emph{density matrices}, defined as the set of all rank-$\Ne$ orthogonal projectors on \(\cH=L^2_\per(\Omega)\) with range in $H^1_\per(\Omega)$, \textit{ i.e.}
\begin{equation}\label{eq:cM}
	\cM \coloneqq \Big\{ \gamma \in \cS(\cH),\;\gamma^2=\gamma,\;\tr(\gamma)=\Ne,\; \tr(-\Delta\gamma)<\infty \Big\},
\end{equation}
where \(\cS(\cH)\) is the set of all bounded self-adjoint operators over $\cH$, and $\Ne$ is a fixed integer, whose physical meaning will be clarified in the following section.
\\
For the sake of clarity, let us rewrite the term $\tr(-\Delta\gamma)$ in~\eqref{eq:cM} in a more explicit form.
Since $\gamma\in\cM$, there exists an $L^2_\per(\Omega)$-orthonormal basis $\Phi=(\phi_i)_{1\leq i \leq \Ne}$ of $\Ran(\gamma)~\subset~\fQ(-\Delta)~=~H^1_\per(\Omega)$ such that, using Dirac bra-ket notation, the projector $\gamma$ can be written as
\begin{equation}\label{eq: density matrix diag}
\gamma = \gamma_\Phi = \sum_{i=1}^\Ne |\phi_i\rangle\langle\phi_i|,
\end{equation}
and using \eqref{def: weak trace}, we can write
\begin{align}
    \tr(-\Delta\gamma)= \sum_{i=1}^\Ne\int_\Omega|\nabla\phi_i|^2<+\infty.
\end{align}

\subsection{Problem formulation}\label{sec:dm}

From now on the constant $\Ne$ appearing in definition \eqref{eq:cM} will represent the number of electron in the system. Now consider $\gamma\in\cM$, written as in~\eqref{eq: density matrix diag} for some $\Phi=(\phi_i)_{1\leq i \leq \Ne}$ of $\Ran(\gamma)$.
We can associate to $\gamma$ its \emph{kernel}, still denoted by $\gamma$
\begin{equation}
\gamma({\bfx},{\bfx'}) = \sum_{i=1}^\Ne \phi_i(\bfx)\phi_i^*(\bfx'),
\end{equation}
and define the electronic density $\rho_\gamma\in L^1_{\per}(\Omega;\R)$ as
\begin{equation}\label{eq:rho}
\rho_\gamma(\bfx) \coloneqq \gamma(\bfx,\bfx) = \sum_{i=1}^\Ne |\phi_i(\bfx)|^2 \eqqcolon \rho_\Phi(\bfx).
\end{equation}
We would like to remark that as a consequence of $\phi_i\in H^1_\per(\Omega)$ for $i=1,\dots,\Ne$, $\rho_\gamma$ actually belongs to $L^3_\per(\Omega;\R)$ hence in $L^{2}_\per(\Omega;\R)$ since $\Omega$ is bounded.  Finally we will consider a real-valued functional $E$ taking values over $\cM$, which we will refer to as the \emph{energy functional}. When looking for the ground state of the system, we seek the density matrix $\gamma_\star\in\cM$ that minimizes $E$:
\begin{equation} \label{eq:minE}
	\min \left\{ E(\gamma), \; \gamma \in \cM \right\}.
\end{equation}

In the models studied in this work (see Section~\ref{subsec:kohn_sham}), the energy $E$ can be expressed as the sum of a linear term and a nonlinear term:
\begin{equation}\label{eq:energy_full}
	E(\gamma) \coloneqq\tr(h \gamma) + F(\rho_\gamma).
\end{equation}
Here, $h$ is some bounded-from-below self-adjoint operator on $\cH$ with domain $H^2_\per(\Omega)$ and form domain $H^1_\per(\Omega)$, and $F$
is a function that explicitly depends on the electronic density $\rho_\gamma$ of the density matrix $\gamma\in \cM$.  We assume in the following that \(F:L^{2}_{\per}(\Omega;\R)\to\R\) is continuously differentiable. Under these conditions, problem~\eqref{eq:minE} admits a set of Euler--Lagrange equations, which we proceed to derive.

First, note that the assumptions on $F$ imply that for each $\gamma\in\cM$ there exists $V_{\rho_\gamma}\in L^2_\per(\Omega;\R)$ such that, for any $\widetilde{\gamma}\in\cM$:
 \begin{align}\label{eq: derivate F'(p) riesz form}
        \langle F'(\rho_{\gamma}),\rho_{\widetilde\gamma} \rangle=
        \int_\Omega V_{\rho_{\gamma}} \rho_{\widetilde\gamma}.
    \end{align}
With a slight abuse of notation, as in~\eqref{def: weak trace}, we denote
\begin{align}\label{eq:trV}
    \tr(V_{\rho_\gamma}\widetilde\gamma)\coloneqq \int_\Omega V_{\rho_{\gamma}} \rho_{\widetilde\gamma}.
\end{align}

 Now, if $\gamma\in\cM$ is a solution to problem~\eqref{eq:minE} the first-order variation of $E$ at $\gamma$ reads:
\begin{align*}
\forall\ \zeta\in T_\gamma\cM,
\quad \langle  E'(\gamma),\zeta\rangle = 0,
\end{align*}
where $T_\gamma\cM$, the tangent space at $\gamma$, is defined (see \cite{cances_convergence_2000}) by
\begin{equation}
T_\gamma\cM \coloneqq \lbrace \zeta \in \cS(\cH),\;\gamma\zeta+\zeta\gamma=\zeta,\; \tr(\zeta)=0,\; \tr(-\Delta\zeta)<\infty\rbrace.
\end{equation}
As before $\rho_\zeta\in L^{2}_\per(\Omega;\R)$, and as a consequence
\begin{equation}
\forall\ \zeta\in T_\gamma\cM,
\quad 0 = \langle  E'(\gamma),\zeta\rangle = \tr((h+V_{\rho_{\gamma}})\zeta)=\tr(H_{\rho_\gamma}\zeta),
\end{equation}
where, as a consequence of Kato-Rellich theorem \cite[Theorem X.12]{ReedFourierAnalysis1975} and the assumptions made on $h$, the operator $H_{\rho_\gamma} \coloneqq h+V_{\rho_{\gamma}}$ is self-adjoint on $\cH$ with domain $H^2_\per(\Omega)$ and form domain $H^1_\per(\Omega)$, and bounded from below.

Choosing $\zeta=|\phi\rangle\langle\psi|+|\psi\rangle\langle\phi|\in T_\gamma \cM$ with $\phi\in\operatorname{Ran}(\gamma)$ and $\psi\in\operatorname{Ker}(\gamma)$ (see \cite[Lemma 3]{cances_convergence_2000}) we finally obtain the Euler--Lagrange equations corresponding to problem \eqref{eq:minE}: find eigenvectors \(\Phi~=~(\phi_i)_{1\leq i\leq\Ne}~\in~(H^1_\per(\Omega))^{\Ne}\) of both $H_{\rho_\gamma}$ and $\gamma$, and eigenvalues \(\Lambda=(\lambda_i)_{1\leq i\leq\Ne}\in\R^{\Ne}\) of $H_{\rho_\gamma}$ such that
\begin{equation} \label{eq:eig_pb}
	\begin{cases}
		H_{\rho_\gamma} \phi_i = \lambda_i \phi_i, \quad  i=1,\ldots, \Ne, \\
  		\langle \phi_i,\phi_j \rangle = \delta_{ij}, \quad  i,j=1,\ldots,\Ne,\\
		\displaystyle \gamma = \sum_{i=1}^\Ne|\phi_i \rangle \langle \phi_i|.\\
	\end{cases}
\end{equation}
This is a \emph{nonlinear eigenvector problem}: the Hamiltonian $H_{\rho_\gamma}$ we seek to diagonalize depends on its own eigenvectors through the electronic density $\rho_\gamma$.

\begin{remark}[\emph{Aufbau} principe] We will assume in the following that for any minimizer~$\gamma_\star$ of~\eqref{eq:minE},
\begin{enumerate}
\item there is a positive gap between the $\Ne$-th and $(\Ne+1)$-st eigenvalues of the mean-field operator $H_{\rho_{\gamma_\star}}$ (counting multiplicities);
\item that $\gamma_\star$ is obtained from the eigenvectors associated to the lowest $\Ne$ eigenvalues of \eqref{eq:eig_pb} (\emph{Aufbau} principle).
\end{enumerate}
Note that for the reduced Hartree--Fock (rHF) model in $\Omega$ mentioned above, the function $F$ is in fact strictly convex, which implies that the minimizers of $E$ over the convex hull ${\rm CH}(\cM)$  of $\cM$ all share the same density $\rho_\star$. It can then be shown (see e.g. \cite{cancesNewApproachModeling2008,Solovej_1991}) that if $H_{\rho_\star}$ is gapped, then the minimizer of $E$ on ${\rm CH}(\cM)$ is unique, belongs to $\cM$, and is the spectral projector on the lowest  $\Ne$ eigenvalues of $H_{\rho_\star}$.

As a consequence, under reasonable assumptions~(see \emph{e.g.}\ \cite{cances_new_2008} for the reduced Hartree--Fock model), the solution \(\gamma\) to the minimization problem~\eqref{eq:minE} is unique, from where it follows that the eigenvectors, up to unitary transformations, are unique. This assumption is also crucial to use the results for eigenvalues clusters of self-adjoint operators presented in \cite{cancesGuaranteedPosterioriBounds2020a} in order to derive \emph{a posteriori} error estimates.
\end{remark}

\subsection{Kohn--Sham DFT equations}\label{subsec:kohn_sham}
The periodic Kohn--Sham equations are commonly used in numerical simulations of condensed matter systems. We now proceed to give a description of this model.
Let $\Phi \in (H^1_{\per}(\Omega))^\Ne$ be a set of $L^2$-orthonormal orbitals on $\Omega$. Denoting the electronic density $\rho_{\Phi}=\sum_{i=1}^\Ne|\phi_{i}|^2$, the Kohn--Sham energy is defined as
\begin{equation}\label{eq:KS-DFT-per}
    \cE(\Phi)\coloneqq\frac{1}{2}\sum_{i=1}^\Ne\int_\Omega|\nabla\phi_i|^2
    + \int_\Omega V\rho_\Phi
    +\frac{1}{2}\cD(\rho_\Phi,\rho_\Phi)+E_{\rm xc}(\rho_\Phi),
\end{equation}
where the first term represents the part of the kinetic energy corresponding to non-interacting electrons, the external potential $V \in L^2_\per(\Omega;\R)$ represents the interaction between the nuclei and the electrons, while the last two terms describe the interaction between the electrons. In particular, $E_{\rm xc}(\rho_\Phi)$ denotes the exchange-correlation energy while \(\mathcal{D}(\rho_1,\rho_2)\) stands for the Coulomb interaction-energy per unit cell
\begin{equation}
	\mathcal{D}(\rho_1,\rho_2) = \int_{\dom} V_{\rm H}[\rho_1](\bfx) \rho_2(\bfx) \, {\rm d}\bfx,
\end{equation}
where the Hartree potential \(V_{\rm H}[\rho]\) is the unique zero-mean solution in \(L^{2}_\per(\Omega;\R)\)  to the periodic Poisson equation
\begin{equation}
	\label{eq:VH}
	-\Delta V_{\rm H}[\rho] = 4\pi\left(\rho - \frac1{|\Omega|}\int_\Omega\rho\right),
\end{equation}
on $\Omega$.

In order to frame this minimization problem in the context of \eqref{eq:energy_full}, recall that \(\gamma_\Phi=\sum_{i=1}^\Ne |\phi_{i} \rangle \langle \phi_{i}|\) denotes the density matrix corresponding to the orbitals \(\Phi\), so we rewrite \eqref{eq:KS-DFT-per} in terms of the generalized trace~\eqref{def: weak trace} as
\begin{align}
    \cE(\Phi) = \tr\left(\Big(-\frac{1}{2}\Delta+V\Big)\gamma_\Phi\right)+\frac{1}{2}\cD(\rho_\Phi,\rho_\Phi)+E_{\rm xc}(\rho_\Phi) \eqqcolon E(\gamma_\Phi).
\end{align}
Thus, in the density matrix formalism, the Kohn--Sham minimization problem reads as in~\eqref{eq:minE}-\eqref{eq:energy_full} with the core Hamiltonian
\begin{equation}
    h=-\frac{1}{2}\Delta+V%_{\rm local}+V_{\rm nl}
\end{equation}
and density functional
\begin{equation}\label{eq:KS_nonlin}
    F(\rho)=\frac{1}{2}\cD(\rho,\rho)+E_{\rm xc}(\rho).
\end{equation}

Moreover, the corresponding Kohn--Sham Hamiltonian appearing in the Euler--Lagrange equations~\eqref{eq:eig_pb} takes the form
\begin{equation}\label{eq:KS_ham}
    H_{\rho_\gamma}=-\frac{1}{2}\Delta+V+V_{\rho_\gamma},
\end{equation}
where $V_{\rho} = V_{\rm H}[\rho] + V_{\rm xc}[\rho]$ and $V_{\rm xc}[\rho] = \frac{{\rm d} E_{\rm xc}(\rho)}{\rm d \rho}$ is the exchange-correlation potential.

\begin{remark}[Reduced Hartree--Fock model]
In Section~\ref{sec:4}, we will provide numerical simulations on the restricted Hartree--Fock model, which amounts to choosing $E_{\rm xc} \equiv 0$ in~\eqref{eq:KS-DFT-per}.

\end{remark}

\begin{remark}[Spins]
To better fit the geometrical framework allowed by the density matrices,
the Kohn--Sham model is presented for systems of \enquote{spinless} electrons. Real systems, as well as the numerical simulations performed at the end of this paper, include the spins. In case of systems with a positive band gap, which we consider here, everything works the same except that $\Ne$ represents the number electron \emph{pairs}, the energy reads $E(\gamma) = 2\tr(h\gamma) + F(\rho_\gamma)$ with $\rho_\gamma(\bm x) = 2\gamma(\bm x,\bm x)$ and the Kohn--Sham Hamiltonian is defined as $H_{\rho_\gamma} = 2h + 2V_{\rho_\gamma}$.
\end{remark}

\begin{remark}[Brillouin zone discretization]\label{rmk:brillouin}
Let us mention that the eigenproblems we presented are naturally equipped with periodic boundary conditions. However this introduces artificial interactions between the sample of the material in the unit cell $\Omega$ and its periodic images. In the case of a perfect crystal with Bravais lattice $\mathbb{L}$ and unit cell $\Omega$, it is recommended to choose a periodic simulation (super)cell $\omega = L\Omega$ consisting of $L^3$ unit cells, so that $\cR = L\mathbb{L}$. Using Bloch transform \cite[Section XIII.16]{reedAnalysisOperators1978}, the problem becomes in the thermodynamic limit $L\to+\infty$
\begin{equation}\label{eq:rmk_brillouin}
    \begin{cases}
		H_{{\bm k}, \rho} \phi_{i,\bm k} = \lambda_{i,\bm k} \phi_{i,\bm k}, \quad  i=1,\ldots, \Ne,\quad \bm k\in\cB,\\
  		\langle \phi_{i,\bm k},\phi_{j,\bm k} \rangle = \delta_{ij}, \quad  i,j=1,\ldots,\Ne,\quad \bm k\in\cB,\\
		\displaystyle \rho(\bm x) = \fint_{\cB} \sum_{i=1}^\Ne|\phi_{i,\bm k}(\bm x)|^2 {\rm d}{\bm k},\\
	\end{cases}
\end{equation}
where $\cB$ is the first Brillouin zone of the crystal and $H_{{\bm k}, \rho}$
are the Bloch fibers of the Kohn--Sham Hamiltonian acting on $L^2_\per(\Omega)$,
with domain $H^2_\per(\Omega)$. Finally, we consider in practice a finite subset
of $\bm k$-points in $\cB$. The geometrical framework and the bounds derived in
this paper then easily extend to the case of several $\bm k$-points in the
Brillouin zone discretization, see Section~\ref{app:brillouin_zone} in the
Supplementary Materials for details. We also refer for instance to \cite{gontierConvergenceRatesSupercell2016} for the numerical analysis of the limit $L\to+\infty$ in the case of the reduced Hartree--Fock model.
\end{remark}

\subsection{Discretization}
In order to approximate the solution to any problem in the framework of~\eqref{eq:minE} by solving the eigenproblem \eqref{eq:eig_pb}, it must be first  discretized in a finite-dimensional space. To this end, let $N$ be a positive integer, \(\cV_N\) be a finite-dimensional subspace of \(H^1_\per(\Omega)\) with ${\rm dim}(\cV_N)$ depending on $N$ such that the larger $N$, the better the Galerkin approximation of~\eqref{eq:minE}:
\begin{equation}
	\min  \left\{ E(\gamma), \; \gamma \in \cM,\; {\rm Ran}(\gamma)\subset \cV_N \right\}.
\end{equation}
The corresponding first-order optimality conditions read:
find eigenvectors \(\Phi_N\coloneqq(\phi_{i,N})_{1\leq i\leq\Ne}~\in~\cV_N^{\Ne}\) and eigenvalues \(\Lambda_N\coloneqq(\lambda_{i,N})_{1\leq i\leq \Ne}\in \R^{\Ne}\) such that
\begin{equation} \label{eq:eig-pb-disc}
	\begin{cases}
		\left( \Pi_N H_{\rho_{\gamma_{N}}} \Pi_N\right) \phi_{i,N} = \lambda_{i,N} \phi_{i,N},\quad i=1,\ldots N, \\
		\langle \phi_{i,N}, \phi_{j,N} \rangle = \delta_{ij}, \quad i=1,\ldots, \Ne,\\
        \displaystyle \gamma_{N} = \sum_{i=1}^\Ne|\phi_{i,N} \rangle \langle \phi_{i,N}|, \\
	\end{cases}
\end{equation}
where \(\Pi_N\) denotes the orthogonal projector onto \(\cV_N\) for the inner product $\langle\cdot,\cdot\rangle$.

More precisely, given \(\Ecut\in\N\), we define, for $N = \sqrt{2 \Ecut}$,
\begin{equation}\label{eq:disc_space}
    \cV_N=
    {\rm Span}\left( e_{\bm G}: \; |\bfG|\leq N\right) =
    {\rm Span}\left( e_{\bm G}: \; \frac{1}{2}|\bfG|^2\leq\Ecut\right).
\end{equation}
The parameter $\Ecut$ appearing in the previous definition is known in the materials science community as the \emph{energy cutoff}.

\subsection{Self-consistent field iterations}
Once a discretization space has been properly defined, it is possible to solve~\eqref{eq:minE} via fixed-point-like algorithms, commonly called self-consistent field (SCF) algorithms, which we briefly recall here (see \cite{cancesNumericalMethodsKohn2023} and references therein for a more detailed presentation of such algorithms). We start from an initial guess \(\Phi_{N,0} \in \cV_N^\Ne \) and, at each iteration \(m \in\N,\) we solve the following linear eigenvalue problem
\begin{equation} \label{eq:eig-pb-SCF}
	\begin{cases}
		\left( \Pi_N H_{\rho_{\gamma_{N,m}}} \Pi_N\right) \phi_{i,N,m+1} = \lambda_{i,N,m+1} \phi_{i,N,m+1}, \quad i=1,\ldots,\Ne, \\
		\langle \phi_{i,N,m+1},\phi_{j,N,m+1} \rangle = \delta_{ij}, \quad i,j=1,\ldots,N,\\
  		\displaystyle \gamma_{N,m+1} = \sum_{i=1}^\Ne|\phi_{i,N,m+1} \rangle \langle \phi_{i,N,m+1}|,\\
	\end{cases}
\end{equation}
where the Hamiltonian \(H_{\rho_{\gamma_{N,m}}}\) at the current iteration is diagonalized in order to build the density matrix $\gamma_{N,m+1}$ using the lowest $\Ne$ eigenvalues $\lambda_{1,N,m+1},\ldots,\lambda_{\Ne,N,m+1}$ and so on and so forth. By continuity arguments, the existence of a spectral gap for the infinite-dimensional problem \eqref{eq:eig_pb} ensures that, for \(N\) and \(m\) large enough, the variational eigenvalue problem at iteration \(m\) and its infinite dimensional counterpart also admit such a gap. They thus fit the assumptions required in \cite{cancesGuaranteedPosterioriBounds2020a} and error bounds can be derived for the linear eigenvalue problem \eqref{eq:eig-pb-SCF}.

\section{{\it A posteriori} analysis} \label{sec:3}
In this section we provide an \textit{a posteriori} error analysis, from which we obtain guaranteed error bounds for the ground-state energy of the continuous problem~\eqref{eq:minE}.

\subsection{Abstract analysis}

We start with a short abstract analysis, which indicates how one can obtain
guaranteed upper bounds on $E(\gamma_2)-E(\gamma_1)$ for any density matrices
$\gamma_1,\gamma_2\in\cM$, as long as the nonlinear term
$F$ is a convex functional of the density and lower bounds of the linearized
eigenvalue problems are available. We show in this section that such a bound can
be computed independently of $\gamma_1$ and under
the form $\tr\big((h+V_{\rho_{\gamma_2}}-\mu)\gamma_2\big)$,
where the constant $\mu$ is wisely chosen. Note that
we will take later on $\gamma_2 = \gamma_{N,m}$ and $\gamma_1=\gamma_\star$,
so that the aforementioned bound will ultimately end up as a bound on $E(\gamma_{N,m})
- E(\gamma_\star)$ that does not depend on $\gamma_\star$.

\begin{lemma}
	For any \(\mu \in \R,\) for any \(\gamma_1,\gamma_2\in \cM,\) there holds
	\begin{equation} \label{eq:lem31_eq}
		\begin{split}
			E(\gamma_2) - E(\gamma_1) &=
			     \tr \big((h+V_{\rho_{\gamma_2}}-\mu)\gamma_2\big)
            - \tr \big((h+V_{\rho_{\gamma_2}}-\mu)\gamma_1\big) \\ & \phantom{=}
			-\left( F(\rho_{\gamma_1}) - F(\rho_{\gamma_2} )
			-\langle F'(\rho_{\gamma_2}), \rho_{\gamma_1} - \rho_{\gamma_2} \rangle\right)
			\;.
		\end{split}
	\end{equation}
\end{lemma}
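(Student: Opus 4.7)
The statement is a purely algebraic identity, so the plan is to expand the right-hand side of \eqref{eq:lem31_eq} and recognize it as $E(\gamma_2)-E(\gamma_1)$. No analytic subtlety is expected; the only things to verify are that each trace is well-defined in the sense of \eqref{def: weak trace}--\eqref{eq:trV} and that the prescribed cancellations actually occur.

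First, I would split the two traces on the right-hand side by linearity into their $h$-part, $V_{\rho_{\gamma_2}}$-part, and $\mu$-part. The $\mu$-contribution is $-\mu(\tr\gamma_2 - \tr\gamma_1)$, which vanishes because both $\gamma_1,\gamma_2\in\cM$ satisfy $\tr(\gamma_i)=\Ne$ by definition \eqref{eq:cM}; this is what makes $\mu$ arbitrary.

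Next, I would use the convention \eqref{eq:trV} together with the Riesz-type identity \eqref{eq: derivate F'(p) riesz form} that defines $V_{\rho_{\gamma_2}}$: one has
\begin{equation*}
\tr(V_{\rho_{\gamma_2}}\gamma_2) - \tr(V_{\rho_{\gamma_2}}\gamma_1) = \int_\Omega V_{\rho_{\gamma_2}}\bigl(\rho_{\gamma_2}-\rho_{\gamma_1}\bigr) = -\langle F'(\rho_{\gamma_2}),\rho_{\gamma_1}-\rho_{\gamma_2}\rangle,
\end{equation*}
so that the Riesz term on the right of \eqref{eq:lem31_eq} exactly cancels the $V_{\rho_{\gamma_2}}$-contribution coming from the two traces.

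After these cancellations, the right-hand side reduces to $\tr(h\gamma_2)-\tr(h\gamma_1) + F(\rho_{\gamma_2})-F(\rho_{\gamma_1})$, which is $E(\gamma_2)-E(\gamma_1)$ by the definition \eqref{eq:energy_full} of $E$. There is no real obstacle: one just needs to be careful that the trace $\tr(h\gamma_i)$ is meaningful (this follows from $\gamma_i\in\cM$ and the form-domain assumption on $h$), that $\tr(V_{\rho_{\gamma_2}}\gamma_i)$ is interpreted via \eqref{eq:trV}, and that the cancellation of the $\mu$ term uses the trace constraint on $\cM$. The utility of introducing $\mu$ is not part of this lemma but will appear in subsequent arguments when minimizing the first trace over $\gamma_1$.
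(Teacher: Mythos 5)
Your proposal is correct and follows essentially the same route as the paper: cancel the $\mu$ term via $\tr(\gamma_1)=\tr(\gamma_2)=\Ne$, rewrite the $V_{\rho_{\gamma_2}}$ contribution through the Riesz identity \eqref{eq: derivate F'(p) riesz form}--\eqref{eq:trV}, and identify the remaining terms with $E(\gamma_2)-E(\gamma_1)$ via \eqref{eq:energy_full}. The only cosmetic difference is that the paper manipulates the trace difference $J(\gamma_1,\gamma_2)$ into $E(\gamma_2)-E(\gamma_1)$ plus a remainder, whereas you expand the full right-hand side; the algebra is identical.
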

\begin{proof}
	Define the following intermediate quantity for any \(\gamma_1,\gamma_2\in\cM\):
	\begin{equation}
		J(\gamma_1,\gamma_2) \coloneqq  \tr \big((h+V_{\rho_{\gamma_2}} - \mu)\gamma_2\big)
        - \tr \big( (h+V_{\rho_{\gamma_2}} - \mu)\gamma_1\big) \;.
	\end{equation}
	Then, for any \(\mu\in \R\) and any \(\gamma_1,\gamma_2\in\cM,\) there holds, using that \(\tr(\gamma_2) = \tr(\gamma_1) = \Ne\), \eqref{eq: derivate F'(p) riesz form} and~\eqref{eq:energy_full}
	\begin{align*}
		J(\gamma_1,\gamma_2) &=
		\tr\big( (h+V_{\rho_{\gamma_2}}) \gamma_2\big) - \tr\big((h+V_{\rho_{\gamma_2}}) \gamma_1\big) \\ &=
		E(\gamma_2) + \tr(V_{\rho_{\gamma_2}} \gamma_2) - F(\rho_{\gamma_2}) - E(\gamma_1) - \tr(V_{\rho_{\gamma_2}} \gamma_1) + F(\rho_{\gamma_1}) \\ &=
		E(\gamma_2) - E(\gamma_1) + F(\rho_{\gamma_1}) - F(\rho_{\gamma_2}) - \langle F'(\rho_{\gamma_2}), \rho_{\gamma_1} - \rho_{\gamma_2} \rangle \;,
	\end{align*}
	hence the result.
\end{proof}

In order to obtain guaranteed error bounds, we now make the following, crucial, assumption.
\begin{assumption}[Convexity of the functional \(F\)]
\label{ass:convexity}
	The functional \(F\) is convex on~\(\cM\), so that
	\begin{equation}
		\forall \gamma_1, \gamma_2 \in \cM, \quad
		F(\rho_{\gamma_1}) - F(\rho_{\gamma_2}) - \langle F'(\rho_{\gamma_2}), \rho_{\gamma_1} - \rho_{\gamma_2} \rangle \ge 0.
	\end{equation}
\end{assumption}

\begin{corollary}
	\label{cor:bound}
	Let Assumption~\ref{ass:convexity} be satisfied and $\gamma_1, \gamma_2 \in \cM$ be fixed. Assume that \(\mu\in\R\) is such that
 \begin{equation}
 \label{eq:ass_positive}
 \tr \big((h+V_{\rho_{\gamma_2}}-\mu)\gamma_1\big) \ge 0.
 \end{equation}
	Then
	\begin{equation}
		E(\gamma_2) - E(\gamma_1) \le \tr\big((h+V_{\rho_{\gamma_2}} - \mu)\gamma_2\big) \;.
	\end{equation}
\end{corollary}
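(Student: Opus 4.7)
The plan is to read off the corollary directly from the identity \eqref{eq:lem31_eq} of the preceding lemma by discarding two non-negative terms. First I would apply that lemma with the same $\gamma_1, \gamma_2 \in \cM$ and the given $\mu \in \R$ to write
\begin{align*}
E(\gamma_2) - E(\gamma_1)
&= \tr\big((h+V_{\rho_{\gamma_2}}-\mu)\gamma_2\big) - \tr\big((h+V_{\rho_{\gamma_2}}-\mu)\gamma_1\big) \\
&\phantom{=} - \bigl( F(\rho_{\gamma_1}) - F(\rho_{\gamma_2}) - \langle F'(\rho_{\gamma_2}), \rho_{\gamma_1} - \rho_{\gamma_2}\rangle \bigr).
\end{align*}

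Next, I would observe that Assumption~\ref{ass:convexity} states precisely that the bracketed term on the second line (the Bregman-like remainder associated with $F$) is non-negative, so subtracting it can only decrease the right-hand side. Likewise, the hypothesis~\eqref{eq:ass_positive} ensures that $\tr((h+V_{\rho_{\gamma_2}}-\mu)\gamma_1) \ge 0$, so subtracting that term also decreases the right-hand side. Combining these two sign observations yields
\begin{equation*}
E(\gamma_2) - E(\gamma_1) \le \tr\big((h+V_{\rho_{\gamma_2}} - \mu)\gamma_2\big),
\end{equation*}
which is the claim.

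There is no real technical obstacle here: the corollary is a one-line consequence of the lemma once convexity and the positivity assumption on the shifted linear term are in hand. The only thing worth mentioning is that, to make the argument rigorous in the trace-class sense, one should note that $\tr((h+V_{\rho_{\gamma_2}}-\mu)\gamma_i)$ is well-defined in the sense of \eqref{def: weak trace} since $\gamma_i \in \cM$ has finite rank with range in $H^1_\per(\Omega) = \fQ(h)$, and $V_{\rho_{\gamma_2}} \in L^2_\per(\Omega;\R)$ gives meaning to $\tr(V_{\rho_{\gamma_2}}\gamma_i)$ via \eqref{eq:trV}; this is already implicit in the statement of the lemma.
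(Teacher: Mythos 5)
Your proposal is correct and follows exactly the paper's argument: apply the identity \eqref{eq:lem31_eq} of the lemma, then drop the two non-negative terms, namely the Bregman remainder of $F$ (non-negative by Assumption~\ref{ass:convexity}) and $\tr\big((h+V_{\rho_{\gamma_2}}-\mu)\gamma_1\big)$ (non-negative by hypothesis~\eqref{eq:ass_positive}). The extra remark on the well-definedness of the traces via \eqref{def: weak trace} and \eqref{eq:trV} is a harmless addition already implicit in the paper's setting.
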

\begin{proof}
	Using Assumption~\ref{ass:convexity} and \(\tr\big((h+V_{\rho_{\gamma_2}}-\mu)\gamma_1\big) \ge 0\) in~\eqref{eq:lem31_eq}, we immediately obtain the result.
\end{proof}

\begin{remark}
    Note that condition \eqref{eq:ass_positive} is equivalent to $\tr((h+V_{\rho_{\gamma_2}})\gamma_1)\geq \mu\Ne$.
\end{remark}

Therefore, provided that we are able to choose \(\mu\) such that \( \tr\big((h+V_{\rho_{\gamma_2}}-\mu)\gamma_1\big) \ge 0 \; ,\) we obtain a guaranteed bound on the energy error which reads \begin{equation} E(\gamma_2) - E(\gamma_1) \le  \tr\big((h+V_{\rho_{\gamma_2}}-\mu)\gamma_2\big)\end{equation} and does not involve \(\gamma_1\). We explain in the next section how to choose this parameter $\mu$.

\subsection{Error bounds for linear eigenvalue problems}\label{sec:guaranteed}

As a preliminary step, we focus on a linear eigenvalue problem in an abstract
setting. We consider a generic, self-adjoint positive operator on
$H^1_\per(\Omega)$ with compact resolvent,
denoted by $A$, that later on will be taken as $A=H_{\rho_{\gamma_{N,m}}}$. Let \((\varphi_i)_{1\leq i\leq \Ne} \in (H^1_\per(\Omega))^{\Ne}\),
\((\varepsilon_i)_{1\leq i\leq\Ne}\in \R^\Ne\),
$\epsilon_1\leq\epsilon_2\leq\ldots\leq \epsilon_{\Ne}$, be the solutions to the
linear (infinite-dimensional) eigenvalue problem
\begin{equation} \label{eq:eig_pb_lin}
	\begin{cases}
		A \varphi_i = \varepsilon_i \varphi_i, \quad  i=1,\dots,\Ne, \\
		\langle \varphi_i, \varphi_j \rangle = \delta_{ij}, \quad  i,j=1,\dots,\Ne,\\
  		\displaystyle \gamma^0 = \sum_{i=1}^\Ne|\varphi_{i} \rangle \langle \varphi_{i}|. \\
	\end{cases}
\end{equation}
It is well known \cite[Section 2.2]{cancesPostprocessingPlanewaveApproximation2021} that $\gamma^0$ minimizes \(\gamma\mapsto\tr(A\gamma)\) over
\(\cM\).
Let $\cV_N \subset H^1_\per(\Omega)$ be a finite dimensional subspace and let \((\varphi_{i,N})_{1\leq i\leq \Ne} \in (\cV_N)^{\Ne}\), \((\varepsilon_{i,N})_{1\leq i\leq\Ne}\in \R^\Ne\), be the solutions to the linear (finite-dimensional) eigenvalue problem
\begin{equation} \label{eq:eig_pb_lin_disc}
	\begin{cases}
		\big(\Pi_NA\Pi_N\big) \varphi_{i,N} = \varepsilon_{i,N} \varphi_{i,N}, \quad  i=1,\dots,\Ne, \\
		\langle \varphi_{i,N}, \varphi_{i,N} \rangle = \delta_{ij}, \quad  i,j=1,\dots,\Ne,\\
  		\displaystyle \gamma^0_N = \sum_{i=1}^\Ne|\varphi_{{i,N}} \rangle \langle \varphi_{{i,N}}|. \\
	\end{cases}
\end{equation}
In~\cite[Theorem 5.9]{cancesGuaranteedPosterioriBounds2020a}, the authors introduce a fully computable error bound \(\eta\), which depends only on the residual, the discretization parameters and a lower bound of \(\varepsilon_{\Ne+1}\), such that
\begin{equation}\label{eq: eta_comp_error_bd}
	0 \le \sum_{i=1}^\Ne\left(\varepsilon_{i,N} - \varepsilon_{i}\right) \le \eta^2.
\end{equation}
If such a bound is available, defining
\begin{equation}\label{eq: mu computation}
	\mu_N^{\rm lb} \coloneqq \frac{1}{\Ne}\left(\sum_{i=1}^\Ne\varepsilon_{i,N} - \eta^2\right)
	\le \frac{1}{\Ne} \sum_{i=1}^\Ne \varepsilon_{i},
\end{equation}
we obtain a computable constant $\mu_N^{\rm lb}$ such that for any $\gamma\in\mathcal M$, $\tr\big((A - \mu_N^{\rm lb})\gamma\big) \ge 0$ holds, meaning~\eqref{eq:ass_positive} holds.

In order to obtain a computable $\eta^2$ satisfying \eqref{eq: eta_comp_error_bd} above, we follow~\cite{cancesGuaranteedPosterioriBounds2020a} adapted to the current settings.  This bound depends on
\begin{enumerate}[label=(\roman*)]
    \item the single eigenpair residuals operators  \(\Res(\varphi_{i,N},\varepsilon_{i,N})\)
    \cite[Definition 3.4]{cancesGuaranteedPosterioriBounds2020a};
    for each $i$ this operator is defined by its action over $\varphi\in H^1_\per(\Omega)$:
    \begin{align}
        \langle\Res(\varphi_{i,N},\varepsilon_{i,N}),\varphi\rangle_{H^{-1}_{\per}(\Omega),H^1_{\per}(\Omega)}\coloneqq \; & \varepsilon_{i,N}\langle \varphi_{i,N},\varphi\rangle \\
        & -\langle A\varphi_{i,N},\varphi\rangle_{H^{-1}_{\per}(\Omega),H^1_{\per}(\Omega)}, \nonumber
    \end{align}
    so that it can be identified with the standard residual $\varepsilon_{i,N}\varphi_{i,N} - A \varphi_{i,N}$, as a vector in $H^{-1}_\per(\Omega)$. With reasonable assumptions on the potential $V$ (see Remark~\ref{rmk:regularity}), we can assume that this residual actually belongs to $L^2_\per(\Omega)$;
    \item the \(2\)-Schatten norm, denoted by \(\norm{\cdot}_{\cG_2(\cH)}\), of the cluster residual \cite[Definition 3.5]{cancesGuaranteedPosterioriBounds2020a}
     \( \Res(\gamma_{N}) \) defined by
     \begin{align}
    &\norm{\Res(\gamma_{N})}_{\cG_2(\cH)}^2=
    \sum_{i=1}^\Ne\langle\Res(\varphi_{i,N},\varepsilon_{i,N}), A^{-1} \Res(\varphi_{i,N},\varepsilon_{i,N})\rangle_{H^{-1}_{\per}(\Omega),H^1_{\per}(\Omega)},\\
    &\norm{A^{-1/2}\Res(\gamma_{N})}_{\cG_2(\cH)}^2=\sum_{i=1}^\Ne\langle A^{-1} \Res(\varphi_{i,N},\varepsilon_{i,N}), A^{-1}\Res(\varphi_{i,N},\varepsilon_{i,N})\rangle;
\end{align}
    \item \(\underline{\varepsilon}_{\Ne+1}\), a (computable) lower bound of the exact eigenvalue \(\varepsilon_{\Ne+1}\).
\end{enumerate}
Then, by employing equations (4.1), (4.4) and (4.16) from \cite{cancesGuaranteedPosterioriBounds2020a} we obtain
\begin{equation}\label{eq:eta_energy}
    	\etaene{}^2 \coloneqq\norm{\Res(\gamma_{N})}_{\cG_2(\cH)}^2+4\varepsilon_{\Ne,N}c_{N}^2\norm{A^{-1/2}\Res(\gamma_{N})}_{\cG_2(\cH)}^2,
\end{equation}
where
\begin{equation}\label{eq:cN}
    c_{N}=\left(1-\frac{\varepsilon_{\Ne,N}}{\underline{\varepsilon}_{\Ne+1}}\right)^{-1}.
\end{equation}
There are two difficulties in the evaluation of this bound: it involves the inversion of the operator $A$ and the computation of a lower bound on $\varepsilon_{\Ne+1}$.
The inversion can be circumvented by recasting it as a linear systems of equations, which is still computationally expensive. This is the focus of the next section, after a short remark on the use of negative Sobolev norms. The computation of the lower bound will be commented in Section~\ref{sec:4}.

\begin{remark}[Negative Sobolev norms]
   The computation of the error bounds using negative Sobolev norms of the residuals
   in~\cite{cancesGuaranteedPosterioriBounds2020a} relies on
    additional assumptions on the linear operator $A$, for instance $A \geq -\frac12\Delta + 1$, a much stronger condition than $A\geq 0$.
 This is fine from a mathematical point of view since shifting by a constant $\sigma > 0$ can ensure that this condition holds. However, in our numerical experiments it appeared that such shifts were so significant that, even for systems with large gaps, the \emph{relative} gap  $\frac{\varepsilon_{\Ne,N}+\sigma}{\underline{\varepsilon}_{\Ne+1}+\sigma}$ then becomes very close to 1, making in turn the constant $c_N$ from \eqref{eq:cN} of order $10^6$, thus rendering the bound unusable. Second, \cite[Theorem 5.9]{cancesGuaranteedPosterioriBounds2020a} actually relies not only on $A \geq -\frac12\Delta + 1$, but also on $A^2 \geq (-\frac12\Delta + 1)^2$, which is \emph{not} implied by the first condition if $A$ and $\Delta$ do not commute.
\end{remark}

\begin{remark}[Regularity of the residuals]\label{rmk:regularity}
    The regularity of the potential $V$ has a direct impact on the regularity of the orbitals $\varphi_i$ and density $\rho$. For instance, it was shown in \cite{cancesNumericalAnalysisPlanewave2012a} for the LDA exchange-correlation functional, that if $V \in H^s_\per(\Omega)$ for $s>3/2$, then the orbitals and the density are in $H^{s+2}_\per(\Omega)$, a valid framework for Troullier--Martins pseudopotentials \cite{troullierEfficientPseudopotentialsPlanewave1991}. For GTH pseudopotentials \cite{goedecker1996separable,hartwigsen1998relativistic}, used in the numerical simulations performed with DFTK in Section~\ref{sec:4}, the pseudopotentials are actually real-analytic, a property that translates to the orbitals, see \cite{cancesPrioriErrorAnalysis2024}. For all these reasons, it is reasonable to assume that the residuals have actually at least $L^2_\per(\Omega)$ regularity.
\end{remark}

\subsection{Practical strategies for efficient computation of error bounds based on operator splitting}\label{sec:practical}

We now discuss some alternative strategies which do not require to solve a full linear system for computing \eqref{eq:eta_energy}.
We assume $A = -\frac12\Delta + V$ for some linear potential $V \in L^2_\per(\Omega;\R)$ such that $A>0$, as the Hamiltonian at a given SCF iteration will be of this form.
Recall that $\Pi_N$ is the orthogonal projection on the final dimensional subspace $\cV_N\subset H^1_\per(\Omega)$, with $\Pi_N^\perp$ being the projection on the corresponding orthogonal complement.
In the decomposition $\cH = \cV_N \oplus \cV_N^\perp$, the operator $A$ can be written in block representation as
\begin{equation}
    A=\begin{bmatrix}
        \Pi_NA\Pi_N &\Pi_N A\Pi_N^\perp\\\Pi_N^\perp A\Pi_N & \Pi_N^\perp A \Pi_N^\perp
    \end{bmatrix}.
\end{equation}
Defining $\langle V\rangle \in \R$ as the average value of the potential $V$ over the unit cell and setting
\begin{equation}
    H_0=\begin{bmatrix}
        \Pi_N \left(-\frac{1}{2}\Delta+V\right)\Pi_N & 0 \\ 0 & \Pi_N^\perp \left(-\frac{1}{2}\Delta+\langle V\rangle\right)\Pi_N^\perp
    \end{bmatrix},
\end{equation}
together with
\begin{equation}
    W=\begin{bmatrix}
        0 &\Pi_N V\Pi_N^\perp\\\Pi_N^\perp V\Pi_N & \Pi_N^\perp \big(V-\langle V\rangle\big) \Pi_N^\perp
    \end{bmatrix},
\end{equation}
we have $A=H_0+W$. Here, the Laplace operator does not appear in the
off-diagonal blocks because it commutes with the projections $\Pi_N$ and
$\Pi_N^\perp$.
Under the assumption $\norm{H_0^{-1}W}<1$, $A^{-1}$ admits the Neumann expansion
\begin{equation} \label{ref:neumann}
    A^{-1}=\sum_{n=0}^{+\infty}(-H_0^{-1}W)^nH_0^{-1}.
\end{equation}
Therefore $A^{-1}$ can be approximated, among others, by the zeroth-order term in the series:
\begin{equation}\label{eq: zeroth_ord_bnd}
    A^{-1}=H_0^{-1}+\mathcal{O}(\norm{H_0^{-1}W}),
\end{equation}
or the first-order term of the series:
\begin{equation}\label{eq: first_ord_bnd}
    A^{-1}=H_0^{-1}-H_0^{-1}WH_0^{-1}+\mathcal{O}(\norm{H_0^{-1}W}^2).
\end{equation}

This leads to two corresponding approximations for the estimator $\eta$ defined by \eqref{eq:eta_energy}. Using the notation $r_{i,N}=\Res(\varphi_{i,N},\varepsilon_{i,N})$ for the sake of clarity, this bound reads:
\begin{equation}\label{eq:eta_energy1}
    \etaene{0}^2  \coloneqq \sum_{i=1}^\Ne\langle r_{i,N}, H_0^{-1} r_{i,N}\rangle
    +4\varepsilon_{\Ne,N}c_{N}^2
    \sum_{i=1}^\Ne\langle H_0^{-1} r_{i,N}, H_0^{-1}r_{i,N}\rangle,
\end{equation}
and
\begin{equation}\label{eq:eta_energy2}
    \begin{split}
        \etaene{1}^2 \coloneqq & \sum_{i=1}^\Ne\langle r_{i,N}, \left( H_0^{-1}-H_0^{-1}WH_0^{-1} \right)  r_{i,N}\rangle \\ & +4\varepsilon_{\Ne,N}c_{N}^2
         \sum_{i=1}^\Ne\langle \left( H_0^{-1}-H_0^{-1}WH_0^{-1} \right)  r_{i,N}, \left( H_0^{-1}-H_0^{-1}WH_0^{-1} \right) r_{i,N}\rangle.
     \end{split}
\end{equation}

Using either one of these two bounds on the eigenvalue differences, one can compute a bound on the energy, following~\eqref{eq: mu computation}, replacing $\eta$ respectively by $\etaene{0}$ or $\etaene{1}$.
By introducing this approximation, we have replaced the full operator inversion by the inversion of the block diagonal operator $H_0$ and a multiplication by the operator $W$. Note that the block $\Pi_N^\perp \left(-\frac{1}{2}\Delta+\langle V\rangle\right)\Pi_N^\perp$ being diagonal in Fourier representation greatly simplifies its inversion.
Also, the block $\Pi_N \left(-\frac{1}{2}\Delta+V\right)\Pi_N$ is dense, but its inversion is performed on the finite dimensional space $\cV_N$. In fact, since \(\Pi_{N}r_{i,N}=0 \) (if we assume the finite-dimensional linear eigenvalue problem \eqref{eq:eig_pb_lin_disc} is solved exactly), the computation of the zeroth-order approximation is obtained by the inversion of a diagonal system in $\cV_N^\perp$ and thus is very efficient. The price of this approach is the introduction of an additional, \emph{a priori} uncontrollable, source of error, originating from the truncation of the Neumann series.

We now obtain an upper bound for the remainder in~\eqref{eq: zeroth_ord_bnd} or~\eqref{eq: first_ord_bnd}, controlling the additional error introduced above. This will result in two more additional bounds that are mathematically guaranteed. We begin by writing the preconditioned residual $A^{-1}r_{i,N}$ as
\begin{equation} \label{eq:3-4}
	A^{-1}r_{i,N} = \sum_{n=0}^{+\infty}(-H_{0}^{-1}W)^nH_{0}^{-1}r_{i,N},
\end{equation}
and split it into two parts, depending on the number of terms in the approximation, which we denote by \(L\). Notice $L=0$ corresponds to ~\eqref{eq: zeroth_ord_bnd} and $L=1$ corresponds to~\eqref{eq: first_ord_bnd}. We have \(A^{-1}r_{i,N} = \chi_{i,N}+ e_{i,N}\) with
\begin{equation}
	\chi_{i,N} = \sum_{n=0}^{L}(-H_{0}^{-1}W)^nH_{0}^{-1}r_{i,N}
\end{equation}
and
\begin{equation}
    e_{i,N} = \sum_{n=L+1}^{+\infty}(-H_{0}^{-1}W)^nH_{0}^{-1}r_{i,N}.
\end{equation}
The norm of the remainders can then be estimated as
\begin{equation} \label{eq:3-1}
    \begin{split}
    	\|e_{i,N}\| &= \left\|\sum_{n=L+1}^{+\infty}(-H_{0}^{-1}W)^nH_{0}^{-1}r_{i,N}\right\|
        \\ &=
    	\left\|\sum_{n=0}^{+\infty} (-H_{0}^{-1}W)^n (-H_{0}^{-1}W)^{L+1} H_{0}^{-1}r_{i,N}\right\| \\ & \leq
    	\frac{\|(-H_{0}^{-1}W)^{L+1} H_{0}^{-1}r_{i,N}\|}{1-\|H_{0}^{-1}W\|} \leq
    	\frac{\|(-H_{0}^{-1}W)^{L+1}\| \|H_{0}^{-1}r_{i,N}\|}{1-\|H_{0}^{-1}W\|}
        \eqqcolon \tilde{e}_{i,N,L}.
    \end{split}
\end{equation}

For any $i=1,\ldots, \Ne$, we can now bound the preconditioned residuals appearing in \eqref{eq:eta_energy} as follows:
\begin{equation} \label{eq:3-2}
    \begin{split}
    	\langle r_{i,N} , A^{-1}r_{i,N}\rangle &=
    	\langle r_{i,N} , \chi_{i,N}+ e_{i,N}\rangle \\ &\leq
    	\langle r_{i,N} , \chi_{i,N}\rangle + \|r_{i,N}\| \|e_{i,N}\|,
     \end{split}
\end{equation}
and
\begin{equation} \label{eq:3-3}
    \begin{split}
    	\langle A^{-1}r_{i,N} , A^{-1}r_{i,N}\rangle &=
    	\langle \chi_{i,N}+ e_{i,N} , \chi_{i,N}+ e_{i,N}\rangle \\ &\leq
    	\langle \chi_{i,N}, \chi_{i,N}\rangle + 2\|e_{i,N}\| \|\chi_{i,N}\| + \|e_{i,N}\|^2.
    \end{split}
\end{equation}

Next, we sum \eqref{eq:3-2} and \eqref{eq:3-3} over $i=1,\dots,\Ne$, substitute in \eqref{eq:eta_energy}, \eqref{eq:3-1} and identify the terms corresponding to~\eqref{eq:eta_energy1} and~\eqref{eq:eta_energy2} to obtain fully guaranteed upper bounds on $\eta^2$. These bounds read, for the zeroth-order approximation,
\begin{equation}\label{eq:eta_energy1_g}
    \eta^2 \leq \etaene{0}^2 + \sum_{i=1}^\Ne \|r_{i,N}\| \tilde{e}_{i,N,0} + 4\varepsilon_{\Ne,N}c_{N}^2\big(2\tilde{e}_{i,N,0} \|H_0^{-1}r_{i,N}\| + \tilde{e}_{i,N,0}^2\big) \eqqcolon \etaene{0,\rm g}^2,
\end{equation}
and, for the first-order approximation,
\begin{equation}\label{eq:eta_energy2_g}
    \begin{split}
        \eta^2 &\leq \etaene{1}^2 + \sum_{i=1}^\Ne \|r_{i,N}\| \tilde{e}_{i,N,1} \\ &+ 4\varepsilon_{\Ne,N}c_{N}^2\big(2\tilde{e}_{i,N,1} \|( H_0^{-1}-H_0^{-1}WH_0^{-1})r_{i,N}\| + \tilde{e}_{i,N,1}^2\big) \eqqcolon \etaene{1,\rm g}^2.
    \end{split}
\end{equation}

Regarding a way to estimate the operator norm \(\|H_{0}^{-1}W\|\), we refer to
Section~\ref{app:opnorm} in the Supplementary Materials.
We now show how to compile everything into \emph{a posteriori} error bounds for the nonlinear problem.

\subsection{Error bounds for the nonlinear problem}

We now apply the error bounds obtained for a linear operator $A$ to the nonlinear problem of interest in this article, namely~\eqref{eq:eig_pb}.
In practice, for a fixed $N$, $\gamma_N$ is not directly computable but is rather obtained as the limit of a sequence $(\gamma_{N,m})_{m\in\N}$ typically generated by SCF iterations \eqref{eq:eig-pb-SCF}.
To this end, we follow Section~\ref{sec:guaranteed} by applying Corollary~\ref{cor:bound} with a global minimizer $\gamma_\star$ of \eqref{eq:minE} (resp. $\gamma_{N,m}$ from \eqref{eq:eig-pb-SCF}) in place of $\gamma_1$ (resp. $\gamma_2$), $A = H_{\rho_{\gamma_{N,m}}}$ (with $A$ shifted by a positive constant if necessary to guarantee $A>0$) and $\varepsilon_{i,N} = \lambda_{i,N,m+1}$. In other words, we compute \( \mu^{\rm lb}_{N,m+1}\) as a lower bound of the mean of the eigenvalues from the (infinite-dimensional) eigenvalue problem \eqref{eq:eig_pb_lin} with $A = H_{\rho_{\gamma_{N,m}}}$, using~\eqref{eq: mu computation} with one of the proposed bounds on $\eta$, \emph{e.g.} $\etaene{0}$ or $\etaene{1}$.
This bound then reads
\begin{equation}
	0\leq E(\gamma_{N,m}) - E(\gamma_\star) \le \tr\big((H_{\rho_{\gamma_{N,m}}} -\mu^{\rm lb}_{N,m+1})\gamma_{N,m}\big),
\end{equation}
and yields the following certification of the energy at iteration \(m\) of the SCF:
\begin{equation}
	E(\gamma_\star) \in  \Big[E(\gamma_{N,m}) - \tr\big((H_{\rho_{\gamma_{N,m}}} -\mu^{\rm lb}_{N,m+1})\gamma_{N,m}\big), E(\gamma_{N,m})\Big].
\end{equation}

We can then also separate the error bound \(\tr\big((H_{\rho_{\gamma_{N,m}}}-\mu^{\rm lb}_{N,m+1})\gamma_{N,m}\big)\) into two parts: one depending mainly on the discretization and the other depending mainly on the number of performed SCF iterations. More precisely, define
\begin{equation}\label{eq: discretization bound}
	\err_{N,m}^{\rm disc} \coloneqq\tr\big((H_{\rho_{\gamma_{N,m}}} -\mu^{\rm lb}_{N,m+1})\gamma_{N,m+1}\big)\geq 0
\end{equation}
and
\begin{equation}\label{eq: SCF bound}
	\begin{split}
		\err_{N,m}^{\rm SCF} &\coloneqq
        \tr(H_{\rho_{\gamma_{N,m}}}\gamma_{N,m}) - \tr(H_{\rho_{\gamma_{N,m}}}\gamma_{N,m+1}) \geq 0. \\
	\end{split}
\end{equation}

Then, as $\tr(\gamma_{N,m+1}) = \tr(\gamma_{N,m})$, we naturally have that $\tr\big((H_{\rho_{\gamma_{N,m}}} -\mu^{\rm lb}_{N,m+1})\gamma_{N,m}\big) = \err_{N,m}^{\rm disc} + \err_{N,m}^{\rm SCF} $. Putting things together, we proved the following theorem.
\begin{theorem}[Fully guaranteed error bound on the energy]
    Under Assumption~\ref{ass:convexity} and assuming that, at iteration $m$ of the SCF algorithm in $\cV_N$, \eqref{eq:ass_positive} holds with $\mu = \mu^{\rm lb}_{N,m+1}$, then we have
    \begin{equation}\label{eq: full bound}
    	\boxed{E(\gamma_{N,m}) - E(\gamma_\star) \le{\normalfont\err}_{N,m}^{\rm disc} + {\normalfont\err}_{N,m}^{\rm SCF},}
    \end{equation}
    where ${\normalfont\err}_{N,m}^{\rm disc}$ and ${\normalfont\err}_{N,m}^{\rm SCF}$ are respectively defined by \eqref{eq: discretization bound} and \eqref{eq: SCF bound}.
\end{theorem}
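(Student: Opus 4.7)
The plan is to specialize Corollary~\ref{cor:bound} to the pair $\gamma_1 = \gamma_\star$, $\gamma_2 = \gamma_{N,m}$ with $\mu = \mu_{N,m+1}^{\rm lb}$, and then to perform a one-line algebraic decomposition of the resulting linear trace into the two announced quantities. There is no new analytic content beyond what has already been established in the excerpt; the theorem is essentially a bookkeeping consequence of the nonlinear-to-linear reduction in Corollary~\ref{cor:bound} together with the variational characterization of the SCF update~\eqref{eq:eig-pb-SCF}.

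Concretely, I would first note that Assumption~\ref{ass:convexity} and the theorem's standing hypothesis on~\eqref{eq:ass_positive} are exactly the two inputs required by Corollary~\ref{cor:bound}, which therefore yields
\begin{equation*}
E(\gamma_{N,m}) - E(\gamma_\star) \;\leq\; \tr\big((H_{\rho_{\gamma_{N,m}}} - \mu_{N,m+1}^{\rm lb})\gamma_{N,m}\big).
\end{equation*}
Then I would add and subtract $\tr(H_{\rho_{\gamma_{N,m}}}\gamma_{N,m+1})$ inside this trace, and exploit $\tr(\gamma_{N,m}) = \tr(\gamma_{N,m+1}) = \Ne$ to write
\begin{align*}
\tr\big((H_{\rho_{\gamma_{N,m}}} - \mu_{N,m+1}^{\rm lb})\gamma_{N,m}\big)
&= \tr\big((H_{\rho_{\gamma_{N,m}}} - \mu_{N,m+1}^{\rm lb})\gamma_{N,m+1}\big) \\
&\quad + \tr(H_{\rho_{\gamma_{N,m}}}\gamma_{N,m}) - \tr(H_{\rho_{\gamma_{N,m}}}\gamma_{N,m+1}),
\end{align*}
which, by the defining equalities~\eqref{eq: discretization bound} and~\eqref{eq: SCF bound}, is exactly $\err_{N,m}^{\rm disc} + \err_{N,m}^{\rm SCF}$; this already delivers~\eqref{eq: full bound}.

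For completeness I would then verify the non-negativity claims attached to the two definitions. The quantity $\err_{N,m}^{\rm SCF}$ is non-negative because the SCF step~\eqref{eq:eig-pb-SCF} defines $\gamma_{N,m+1}$ as the minimizer of $\gamma \mapsto \tr(H_{\rho_{\gamma_{N,m}}}\gamma)$ over rank-$\Ne$ orthogonal projectors with range in $\cV_N$, and $\gamma_{N,m}$ is an admissible competitor. For $\err_{N,m}^{\rm disc}$, one combines~\eqref{eq: mu computation} with the Courant--Fischer min--max principle: $\Ne\,\mu_{N,m+1}^{\rm lb}$ lower bounds $\sum_{i=1}^\Ne \varepsilon_i$, the sum of the $\Ne$ smallest eigenvalues of $H_{\rho_{\gamma_{N,m}}}$, and the latter is dominated by $\tr(H_{\rho_{\gamma_{N,m}}}\gamma_{N,m+1}) = \sum_{i=1}^\Ne \varepsilon_{i,N}$. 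The main conceptual difficulty lies not in this bookkeeping step, but upstream, in producing a genuinely \emph{computable} $\mu_{N,m+1}^{\rm lb}$ that actually fulfills~\eqref{eq:ass_positive}; that is where the cluster-eigenvalue estimator of Section~\ref{sec:guaranteed} and its Neumann-truncated variants in Section~\ref{sec:practical} are doing all the real work.
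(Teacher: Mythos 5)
Your proposal is correct and follows essentially the same route as the paper: apply Corollary~\ref{cor:bound} with $\gamma_1=\gamma_\star$, $\gamma_2=\gamma_{N,m}$, $\mu=\mu^{\rm lb}_{N,m+1}$, then split the resulting trace using $\tr(\gamma_{N,m})=\tr(\gamma_{N,m+1})=\Ne$ into the two defined error components. Your additional verification of the non-negativity of ${\normalfont\err}_{N,m}^{\rm SCF}$ (via the variational character of the SCF step) and of ${\normalfont\err}_{N,m}^{\rm disc}$ (via \eqref{eq: mu computation} and Courant--Fischer) is consistent with what the paper asserts in \eqref{eq: discretization bound}--\eqref{eq: SCF bound} without detailing.
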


We remark that (i) \(\err_{N,m}^{\rm SCF}\) goes to zero as \(m\) goes to infinity, provided that the SCF algorithm does converge, and (ii) \(\err_{N,m}^{\rm disc}\) tends to zero as the discretization space is enlarged, provided that the limit of \(\cV_N\) when \(N \rightarrow +\infty\) is \(H_\per^1(\Omega)\) in the sense that $\forall \phi\in H^1_\per(\Omega)$, $\lVert{\phi-\Pi_N\phi}_{H^1_\per(\Omega)}\rVert\rightarrow 0$ as $N\rightarrow\infty$, and that \(\mu^{\rm lb}_{N,m+1}\) is well chosen, for instance as explained above. We end this paper with a series of test cases, in 1D and 3D, where these bounds and their approximations are applied and compared in term of computational cost and accuracy.

\section{Numerical results}\label{sec:4}

\subsection{Numerical framework}\label{subsec: numerical_framework}

All the simulations presented in this section are realized with the DFTK software, a recent \texttt{Julia} package to perform planewave DFT calculations \cite{herbstDFTKJulianApproach2021}.
DFTK uses the planewave basis introduced in Section~\ref{subsec:kohn_sham}, through a
discretization parameter $\Ecut \coloneqq N^2/2$. The nonlinear eigenproblems~\eqref{eq:eig-pb-disc} for the Kohn--Sham Hamiltonian are solved iteratively using the SCF algorithm described previously, with proper tuning to ensure convergence (Anderson/DIIS acceleration, density mixing, damping, etc.), with a tolerance on the $L^2$-norm between two successive densities set to $10^{-10}$  to stop the iterations. Notice that the linear eigenproblems are solved at each iteration of the SCF procedure with a LOBPCG solver (see \emph{e.g.}\ \cite{nottoliRobustOpensourceImplementation2023}). We assume in this paper that these linear eigenproblems are exactly solved within the variational approximation space $\cV_N$, the treatment of the numerical linear algebra error being left for future work.

The simulations are performed within the periodic reduced Hartree--Fock(rHF) model, for which $E_{\rm xc}(\rho)=0$ (see \emph{e.g.}\ \cite{cances_new_2008,Solovej_1991} for the mathematical properties of the rHF model). Assumption~\ref{ass:convexity} is thus satisfied, as the functional $\rho\mapsto\cD(\rho,\rho)$ is (strictly) convex \cite{gontierConvergenceRatesSupercell2016} on \emph{e.g.} the affine space $\lbrace\rho\in L^2_\per(\Omega): \int_\Omega\rho=\Ne\rbrace.$

Each simulation is performed in a reference space, defined by a parameter $\Ecutref$, and a computation space, defined by a parameter $\Ecut<\Ecutref$. The reference space is assumed to account for the full space and is used to compute the reference solutions, as well as the residuals in various norms. The computation space is used to perform the actual calculation and we seek to bound the error on the energy due to the variational approximation of the Kohn--Sham equations \eqref{eq:KS-DFT-per} in this space.

In all the simulations below, we track the error bound on the energy difference $E(\gamma_{N,m}) - E(\gamma_\star)$ by computing $\err^{\rm SCF}_{N,m}$ and $\err^{\rm disc}_{N,m}$ as in \eqref{eq: full bound}, the latter being evaluated with the different $\eta$'s highlighted in Sections~\ref{sec:guaranteed} and \ref{sec:practical}. For convenience, we recall in Table~\ref{tab:equation_reference_table} the different possibilities, together with their properties and computational cost. Note also that, for the sake of simplicity, the constant $\underline{\varepsilon_{\Ne+1}}$ appearing in $c_N$ \eqref{eq:cN} is taken as the variational approximation $\varepsilon_{\Ne+1,N}$.
Other methods to estimate this lower bound are proposed in~\cite{cancesGuaranteedPosterioriBounds2020a}.

\begin{table}[h!]
\centering
\caption{Names of the bounds and corresponding expressions for the different ways of computing $\eta$ in \eqref{eq: mu computation}. Each of these expression requires $A=H_{\rho_{\gamma_{N,m}}} > 0$ and has a different computational cost.}
\label{tab:equation_reference_table}
\footnotesize
\begin{tabular}{@{}cccccc@{}}
\toprule
\textbf{Name}  & \textbf{Nota-} & \textbf{Fully} & \textbf{Equation} & \textbf{Condition} & \textbf{Computational cost} \\
 & \textbf{tion} & \textbf{guaranteed} & &\\ \midrule
full-inversion & $\etaene{}$  & yes & \eqref{eq:eta_energy} & $A>0$ & full-inversion of $A$ in $\cH$      \\ \midrule
zeroth-order   & $\etaene{0}$ & no & \eqref{eq:eta_energy1} & $A>0$ & diagonal inversion in $\cV_N^\perp$ \\ \midrule
zeroth-order  & $\etaene{0,\rm g}$ & yes & \eqref{eq:eta_energy1_g} & $A>0$ & diagonal inversion in $\cV_N^\perp$ \\
guaranteed  & & & & $\|H_0^{-1}W\|<1$ & and remainder estimation   \\ \midrule
first-order & $\etaene{1}$ & no & \eqref{eq:eta_energy2} & $A>0$ & full-inversion in $\cV_N$ \\ \midrule
first-order &  $\etaene{1,\rm g}$ & yes & \eqref{eq:eta_energy2_g} & $A>0$ & full-inversion in $\cV_N$ \\
guaranteed  & & & & $\|H_0^{-1}W\|<1$ & and remainder estimation   \\\bottomrule
\end{tabular}
\end{table}

\subsection{1D toy model}

We first present simulations obtained with a 1D toy model ($\Omega=(0,10)$), for which reference solutions with very high accuracy can be obtained with a moderate computational cost. The reference basis is built with $\Ecutref = 1000\ \Ha$ and the calculation basis with $\Ecut = 400\ \Ha$. The potential $V$ we use is defined by its Fourier coefficients as:
\begin{equation}
     \forall\ G \in 2\pi\Z, \quad \hat V_G = \begin{cases}
            1 & \mbox{if } G=0, \\
            \frac{\omega_G}{|G|^{1.1}} & \mbox{if } 0 < |G| \leq 100, \\
            \frac{1}{|G|^{1.1}} & \mbox{else}, \\
        \end{cases}
\end{equation}
where $(\omega_G)_{0 < |G| \leq 100}$ are independent random variables, uniformly distributed between $-10$ and $10$. We then solve iteratively the Kohn--Sham equations for $\Ne=3$ on the unit cell $\Omega$ with periodic boundary conditions. Note that we obtained qualitatively similar results with other potentials with comparable Sobolev regularity.

In Figures~\ref{fig:1D_nonopt_shift} and \ref{fig:1D_optimized_shift}, we display the bounds from Table~\ref{tab:equation_reference_table}: the full operator inversion, the zeroth-order and first-order approximations, both with and without the estimation of the remainder terms in the truncation of the Neumann series. Note that, when computing the error bound with $\etaene{0,{\rm g}}$ and $\etaene{1,\rm g}$ at every step of the SCF cycle, not only has the Hamiltonian to be shifted to ensure positivity, but also to satisfy $\|H_0^{-1} W \|<1$. One then realizes that, while both the zeroth- and first-order approximations already give very satisfactory results in Figure~\ref{fig:1D_nonopt_shift} (left), adding the remainder terms in the Neumann series to make the bound fully guaranteed worsen the error estimation by about an order of magnitude (Figure~\ref{fig:1D_optimized_shift}). We also optimized the shift $s$ in order to make the estimation of the remainder terms \eqref{eq:3-1} as small as possible. The optimal shift is computed by a dichotomy to find a zero of the derivative of $\texttt{err}^{\rm disc}_{N,m}$ with respect to $s$, while keeping the constraint $\|H_0^{-1} W \|<1$. This clearly improves the resulting bound, which is guaranteed, but the final accuracy is still far from being satisfying (Figure~\ref{fig:1D_optimized_shift}). As a conclusion, one sees that the best error estimation, even though it is not guaranteed, is given by the zeroth-order bound, without estimating the remainders. Moreover, in Figure~\ref{fig:1D_nonopt_shift} (right), the transition between the two contributions ($\normalfont\err^{\rm SCF}_{N,m}$ and $\err^{\rm disc}_{N,m}$) to the error clearly appears.

In Table~\ref{tab:1D}, we compute the effective ratio between $E(\gamma_{N,m})-E(\gamma_\star)$ and the upper bound $\normalfont\err^{\rm SCF}_{N,m} + \err^{\rm disc}_{N,m}$ for all the $\eta$'s mentioned above. We expect this ratio to be close to 1 when the bound is accurate, which confirms the observations made before. Note that the guaranteed bound obtained with the full-inversion is well approximated by the first-order bound, as there is an additional term taken into account in the Neumann series. The zeroth-order bound also seems to approximate better the true error than the full-inversion bound: this is due to the truncation of the Neumann series used to design this bound, and one can not expect this observation to hold in general.

\begin{figure}[p!]
    \centering
    \includegraphics[width=0.49\linewidth]{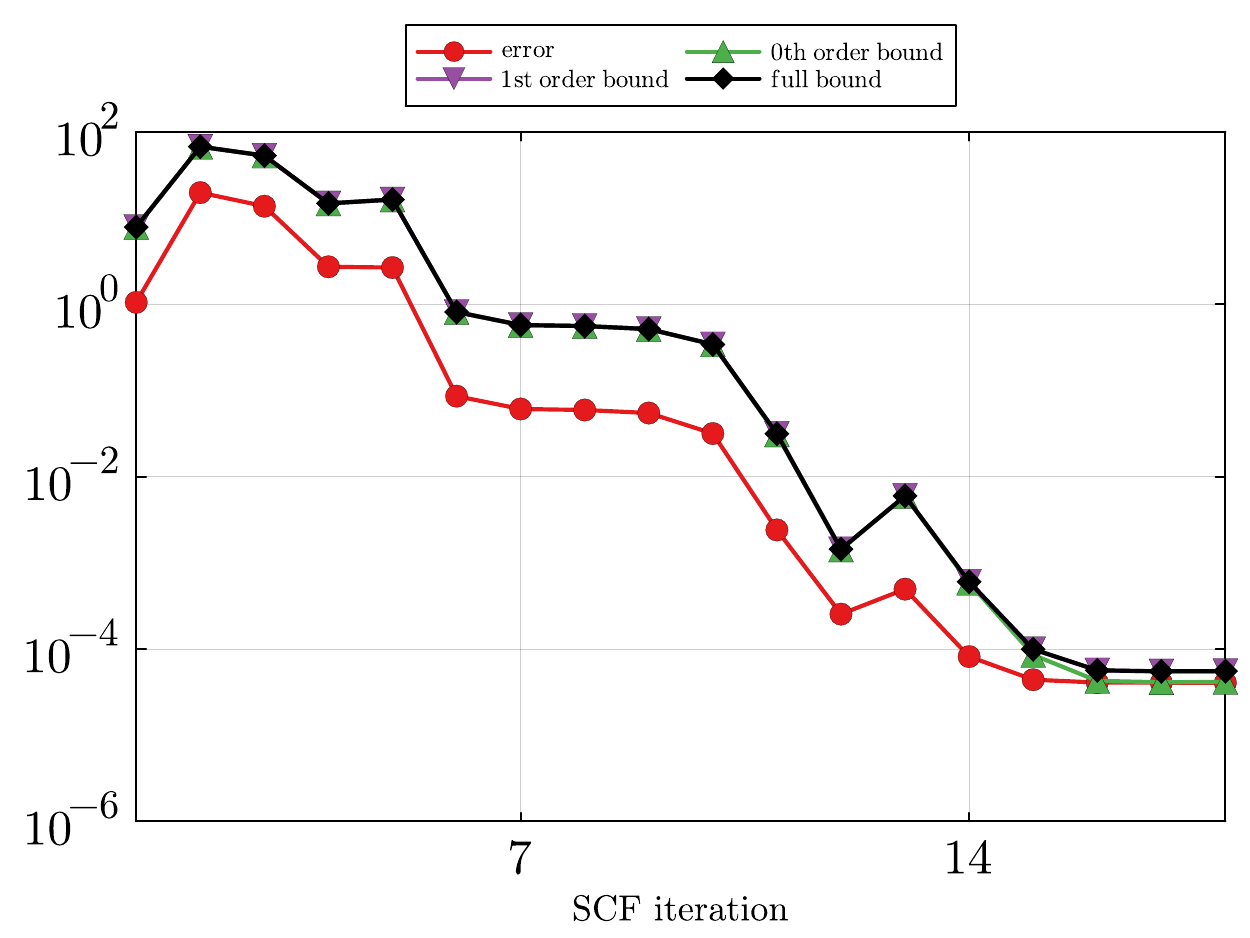}\hfill
    \includegraphics[width=0.49\linewidth]{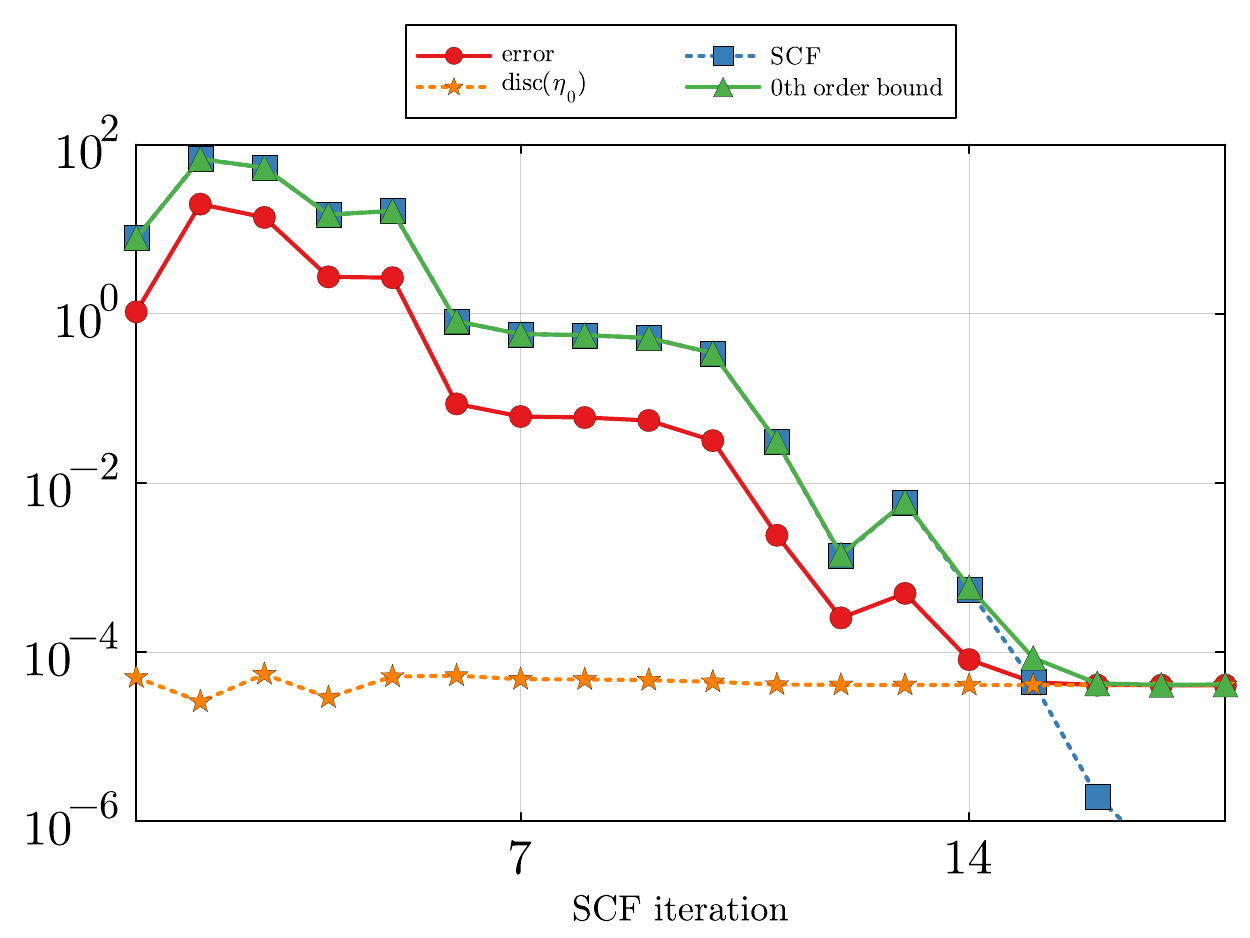}
    \caption{Tracking of the error $E(\gamma_{N,m}) - E(\gamma_\star)$ for a 1D toy model. (Left) full-inversion bound and its zeroth- and first-order approximations. (Right) Zeroth-order bound and its splitting between SCF and discretization contributions, as in \eqref{eq: full bound}. \\ }
    \label{fig:1D_nonopt_shift}
\end{figure}

\begin{figure}[p!]
    \centering
    \includegraphics[width=0.49\linewidth]{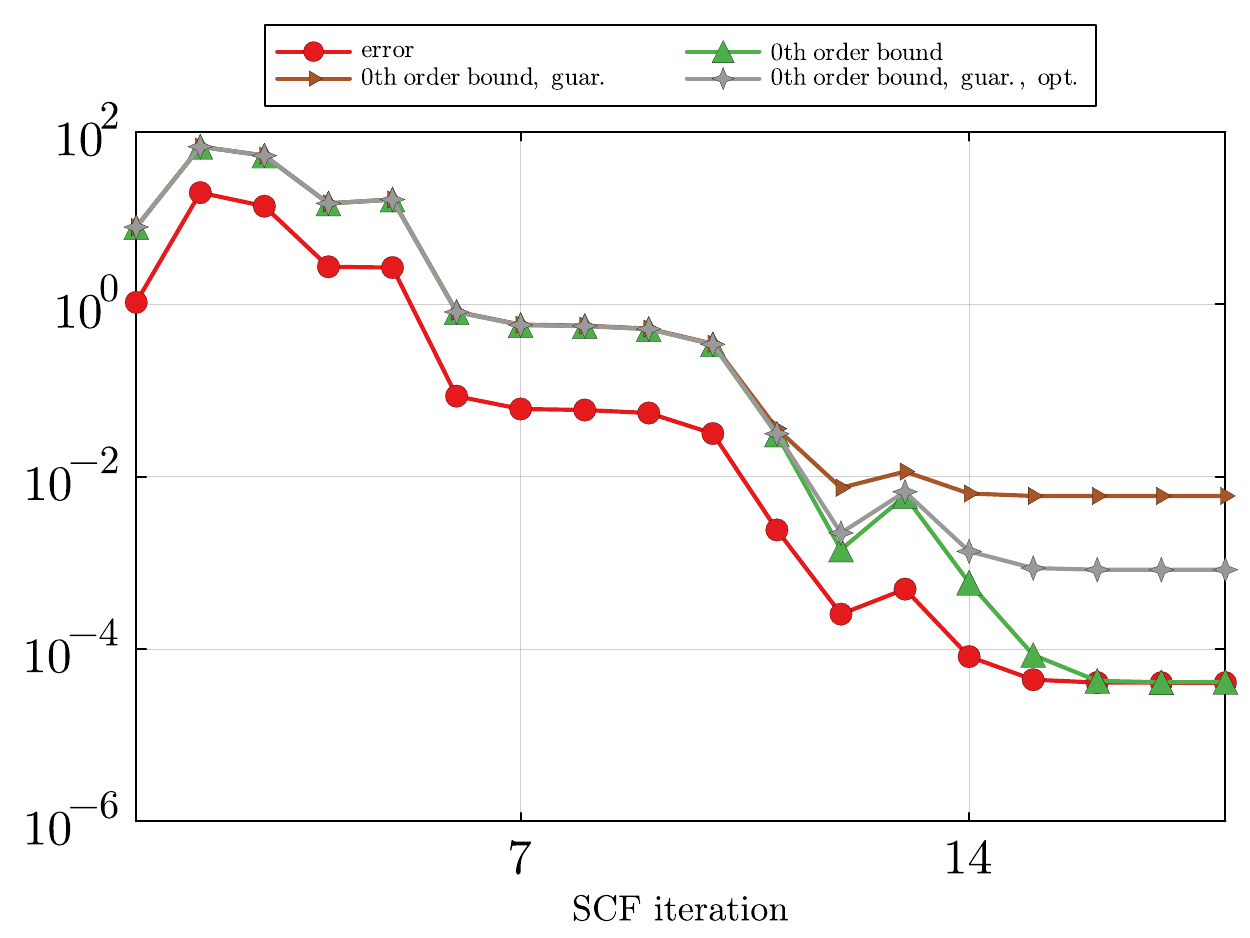}\hfill
    \includegraphics[width=0.49\linewidth]{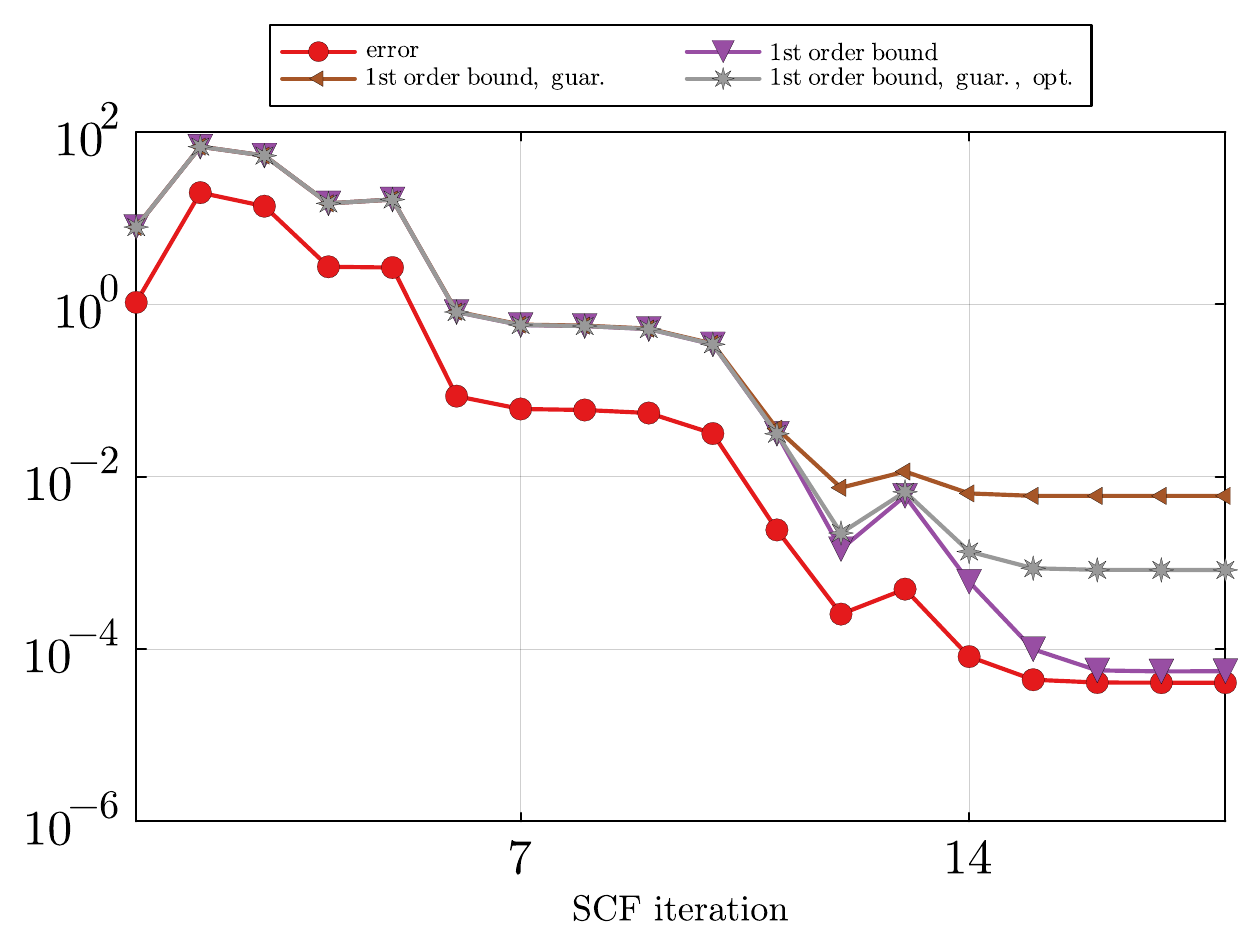}
    \caption{Tracking of the error $E(\gamma_{N,m}) - E(\gamma_\star)$ for a 1D toy model, with the zeroth-order bound (left) and first-order bound (right). We added the associated guaranteed bounds by estimating the remainders and their optimization with respect to the shift.
    }
    \label{fig:1D_optimized_shift}
\end{figure}

\begin{table}[p!]
    \centering
    \caption{Ratio between the error $E(\gamma_{N,m})-E(\gamma_\star)$ and the upper bound $\normalfont\err^{\rm SCF}_{N,m} + \err^{\rm disc}_{N,m}$ for different $\eta$'s, for a 1D toy model. The closer to 1 the ratio, the better the bound.}
    \label{tab:1D}
    \begin{tabular}{@{}cccccccc@{}}
        \toprule
        SCF it. & $\etaene{0}$ & $\etaene{1}$ & $\etaene{}$ & $\etaene{0,\rm g}$ & $\etaene{1,\rm g}$ & $\etaene{0,{\rm g},{\rm opt}}$ & $\etaene{1,{\rm g},{\rm opt}}$ \\ \midrule
        1       & 7.47740 & 7.47740 & 7.47740 & 7.48102 & 7.48106 & 7.47805 & 7.47800 \\
        5       & 6.16377 & 6.16377 & 6.16377 & 6.16581 & 6.16585 & 6.16424 & 6.16424 \\
        10      & 10.8366 & 10.8368 & 10.8368 & 1.04030 & 11.0411 & 10.8586 & 10.8585 \\
        15      & 1.94103 & 2.25435 & 2.25719 & 135.935 & 136.214 & 19.7679 & 19.6773 \\
        20      & 1.00401 & 1.34280 & 1.34587 & 147.209 & 147.513 & 20.4755 & 20.3783 \\ \bottomrule
    \end{tabular}
\end{table}

% \clearpage
\subsection{3D insulating systems}

We now present some numerical illustrations of the error estimation developed in this paper for real 3D systems. In addition to the previous settings, we use the Goedecker--Teter--Hutter (GTH) pseudopotentials
\cite{goedecker1996separable,hartwigsen1998relativistic}, already implemented in DFTK, see Remark~\ref{rem4.1} below. We then consider two physical systems:
\begin{itemize}
    \item a Silicon crystal, first with a single $\bm k$-point only (the $\Gamma$ point), then with eight $\bm k$-points.
    \item a Hydrogen-Fluoride molecule with eight $\bm k$-points.
\end{itemize}

\begin{remark}[pseudopotentials norms] \label{rem4.1}
    The calculations from Section~\ref{app:opnorm} in the Supplementary
    Materials can easily be extended to pseudopotentials in the Kleinmann--Bylander form \cite{kleinmanEfficaciousFormModel1982}: in addition to the $L^\infty$ norm of the local contribution, one just has to add the largest of the projection coefficients from the nonlocal contribution.
\end{remark}

\subsubsection{Silicon (Si) crystal}
For the Si crystal, a semiconductor with a positive band gap, we set
$\Ecutref=400\ \Ha$, and $\Ecut=150\ \Ha$. For the $\bm k$-point sampling (see
Section~\ref{app:brillouin_zone} in the Supplementary Materials), we use a single $\bm k$-point for the first test and eight $\bm k$-points for the second one.

In Figure~\ref{fig:Si_1kpt} (left), we  display, for the first case with a single $\bm k$-point, the error with respect to the reference energy as well as the different error bounds computed from: (i) the energy norm \eqref{eq:eta_energy} using the full inverse, (ii) the zeroth-order approximation \eqref{eq: zeroth_ord_bnd} and (iii) the first-order approximation \eqref{eq: first_ord_bnd}. Even though the last two bounds are not mathematically guaranteed because of the truncation of the Neumann series, the accuracy of the estimation is very satisfying. In Figure~\ref{fig:Si_1kpt} (right), the transition from a SCF dominating error to a discretization dominating error also clearly appears. As for the 1D toy model, we added in Figure~\ref{fig:Si_1kpt_optimized_shift} the estimation of the remainders together with their optimization with respect to the shift. Again, the resulting bound, now mathematically guaranteed, is far from being effective.

In Figure~\ref{fig:Si_8kpt}, we present the results for simulations for the same system, now with eight $\bm k$-points. First, note that this case is not suitable for the rHF model as Silicon behaves as a metallic system in this approximation: one usually needs to introduce a numerical smearing and fractional occupation numbers (see for instance \cite{cancesNumericalQuadratureBrillouin2020} and references therein) to ensure the convergence of the SCF procedure. To avoid this, we use the following workaround. As Silicon is expected to be a semi-conducting system when using the LDA approximation for the exchange-correlation energy (which is not convex), we first run a simulation for this model. Then, we extract the effective (\emph{i.e.} converged) exchange-correlation potential $V_{\rm xc}[\rho]$, and we use it in the external, linear, potential $V$ before running the SCF algorithm for the rHF model. Again, our main observation here is that there is not a significant difference between using the zeroth-order approximation, the first-order approximation or the full-inversion of the operator to compute the energy norm, with a clear transition between the SCF error regime to the discretization error regime. We also observed in our simulations that estimating the remainders of the Neumann series yields bounds that are not accurate enough to be of any practical interest.

Finally, we tabulate in Table~\ref{tab:ratio_comparison} the ratio between the zeroth-order bound or the full-inversion bound and the targeted error: note the difference between the accuracy of the bounds when using one or eight $\bm k$-points. This is due to the chosen linear potential and the rHF model: when using a single $\bm k$-point, the system has a positive gap, but smaller than when using eight $\bm k$-points and a linear potential obtained as above. The constant $c_N$ \eqref{eq:cN} is thus larger when using a single $\bm k$-points, yielding a worse upper bound. This also appears when comparing Figures~\ref{fig:Si_1kpt} and \ref{fig:Si_8kpt}: the error bound is better in the second case, for which the gap is smaller.

\begin{table}[h!]
    \centering
    \caption{Ratio between the error $E(\gamma_{N,m})-E(\gamma_\star)$ and the upper bound $\normalfont\err^{\rm SCF}_{N,m} + \err^{\rm disc}_{N,m}$ using $\etaene{0}$ and $\etaene{}$. Data displayed for both the Si crystal (with one and eight $\bm k$-points) and the HF molecule, every five steps of the SCF iterations.}
    \begin{tabular}{@{}ccccccc@{}}
        \toprule
        \multicolumn{1}{l}{\multirow{2}{*}{SCF it.}} & \multicolumn{2}{l}{Si (one $\bm k$-point)} & \multicolumn{2}{l}{Si (eight $\bm k$-points)} & \multicolumn{2}{l}{HF (eight $\bm k$-points)} \\
        \multicolumn{1}{l}{}                         & $\etaene{0}$         & $\etaene{}$         & $\etaene{0}$          & $\etaene{}$          & $\etaene{0}$          & $\etaene{}$          \\ \midrule
        1                                            & 1.53019              & 1.53019             & 1.48986               & 1.48986              & 3.13099               & 3.13099              \\
        5                                            & 4.04515              & 4.04515             & 1.22005               & 1.22010              & 2.73819               & 2.73871              \\
        10                                           & 6.80721              & 6.80721             & 1.02225               & 1.06797              & 0.99125              & 1.02102              \\
        15                                           & 4.41846              & 4.42815             & --                    & --                   & 0.98945              & 1.01923              \\
        20                                           & 3.77631              & 3.93929             & --                    & --                   & --                    & --                     \\ \bottomrule
    \end{tabular}
\label{tab:ratio_comparison}
\end{table}

% \clearpage

\begin{figure}[p!]
    \centering
    \includegraphics[width=0.49\linewidth]{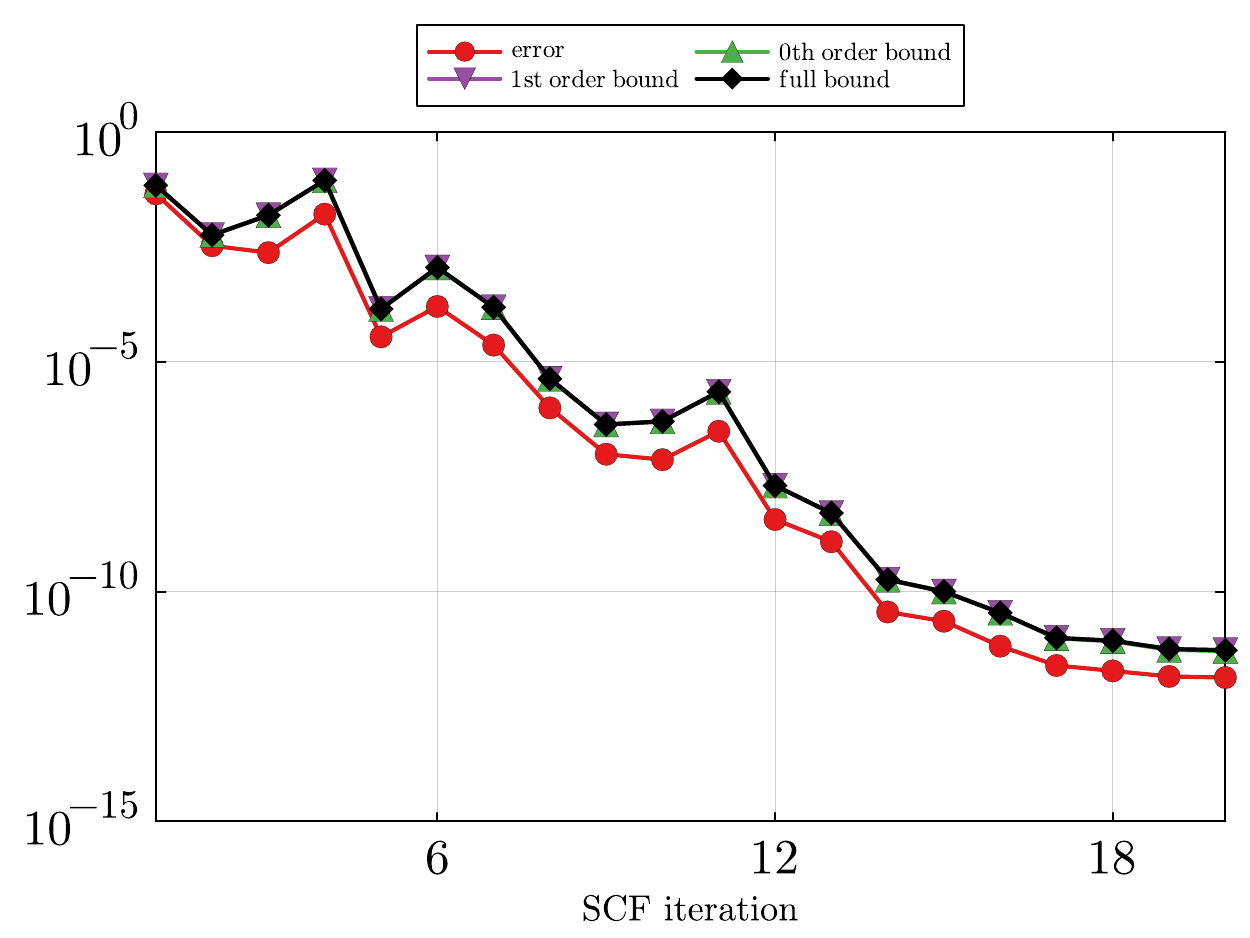}\hfill
    \includegraphics[width=0.49\linewidth]{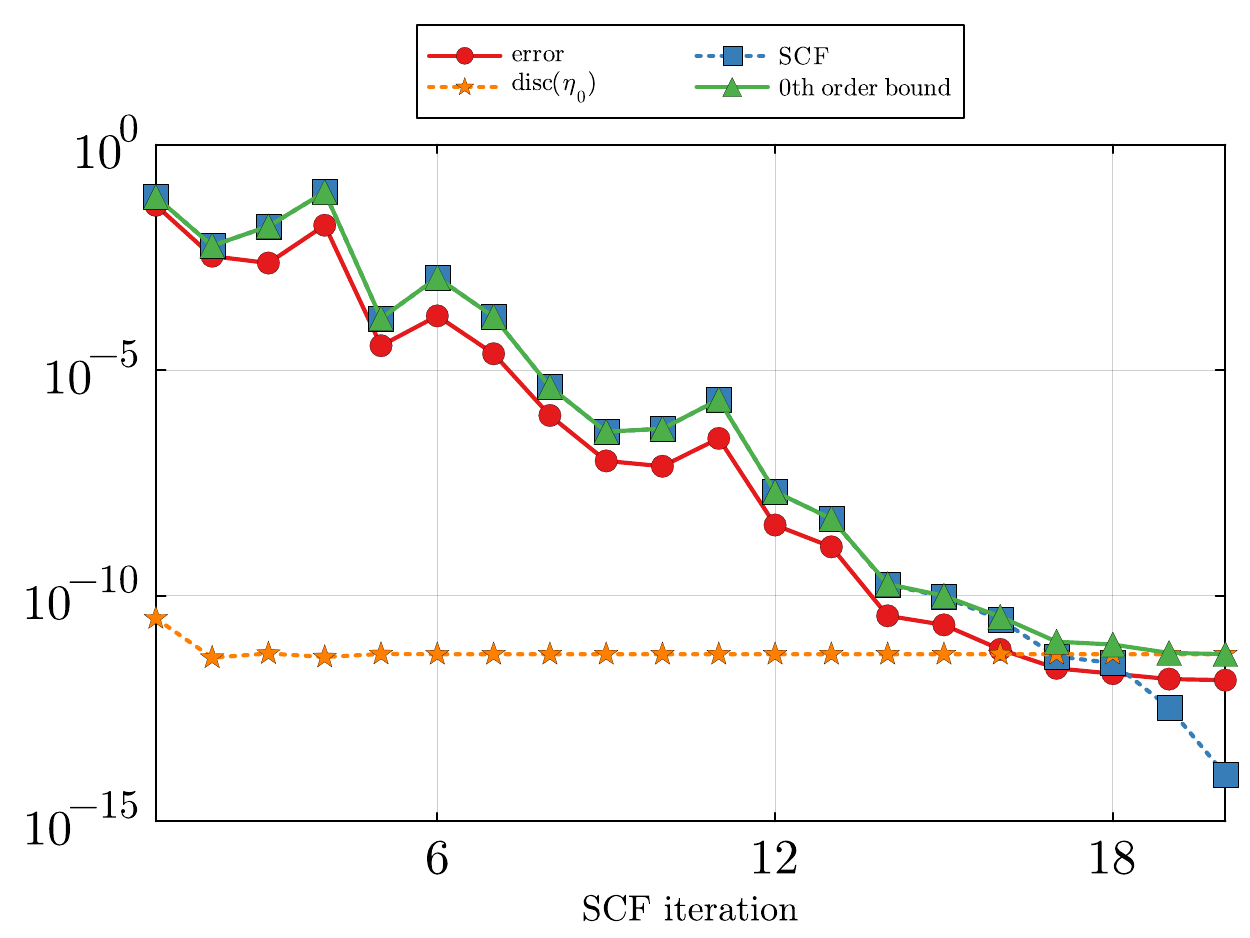}
    \caption{Tracking of the error $E(\gamma_{N,m}) - E(\gamma_\star)$ for a Si crystal (one $\bm k$-point). (Left) Full-inversion bound with zeroth- and first-order approximations. (Right) Zeroth-order bound and its splitting between SCF and discretization contributions, as in \eqref{eq: full bound}.}
    \label{fig:Si_1kpt}
\end{figure}

\begin{figure}[p!]
    \centering
    \includegraphics[width=0.49\linewidth]{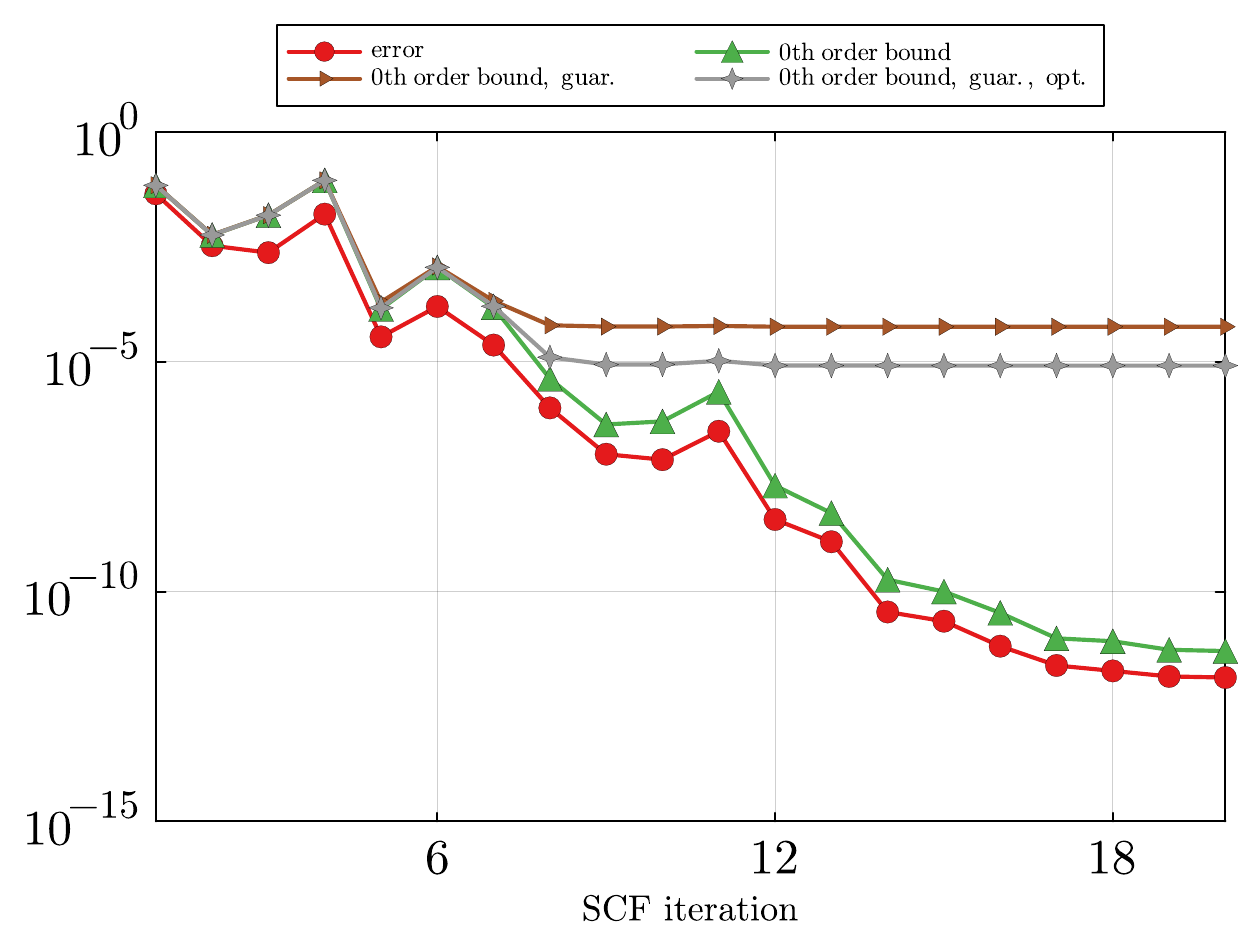}\hfill
    \includegraphics[width=0.49\linewidth]{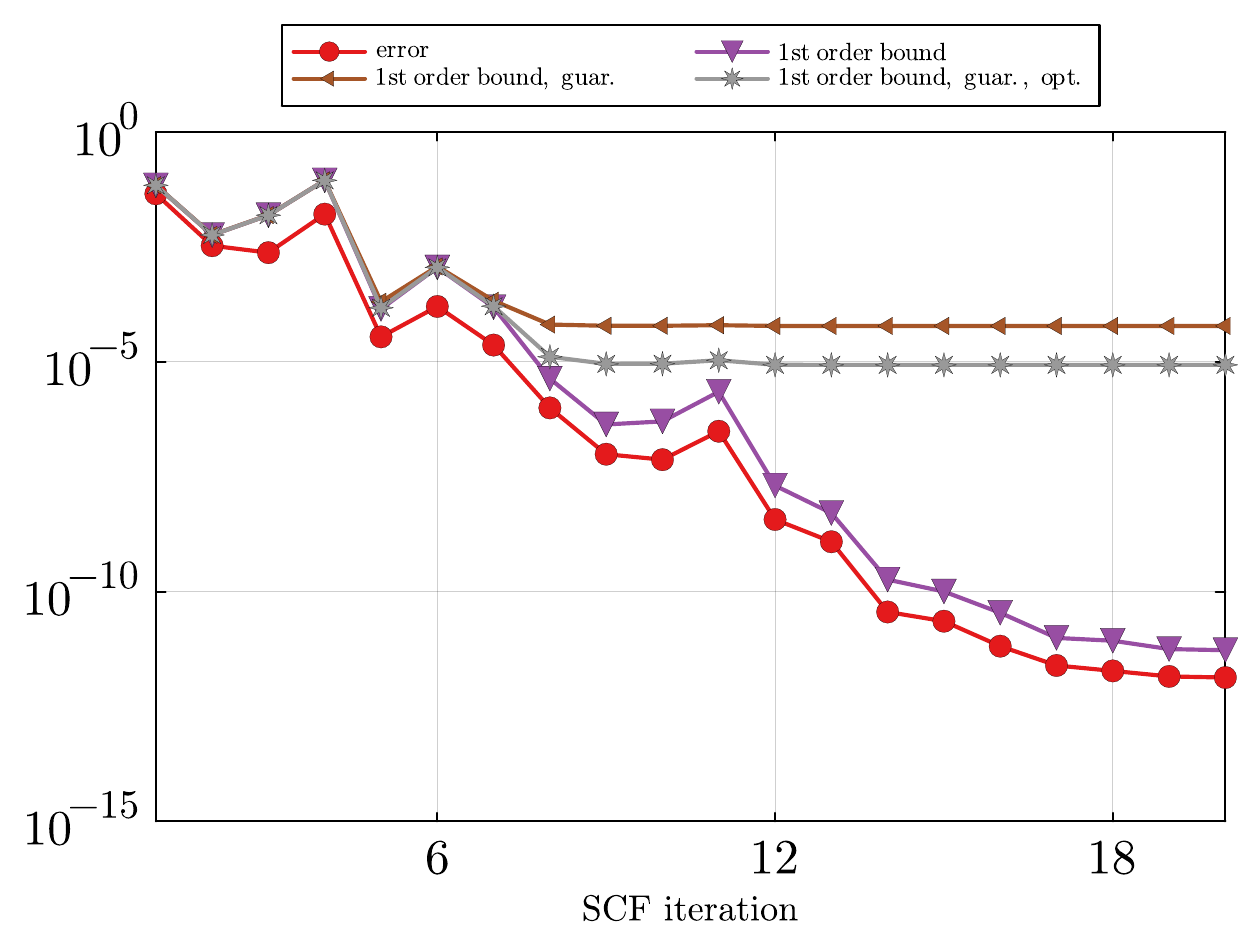}
    \caption{Tracking of the error $E(\gamma_{N,m}) - E(\gamma_\star)$ for a Si crystal (one $\bm k$-point), with the zeroth-order bound (left) and first-order bound (right). We added the associated guaranteed bounds by estimating the remainders and their optimization with respect to the shift.}
    \label{fig:Si_1kpt_optimized_shift}
\end{figure}

\begin{figure}[p!]
    \centering
    \includegraphics[width=0.49\linewidth]{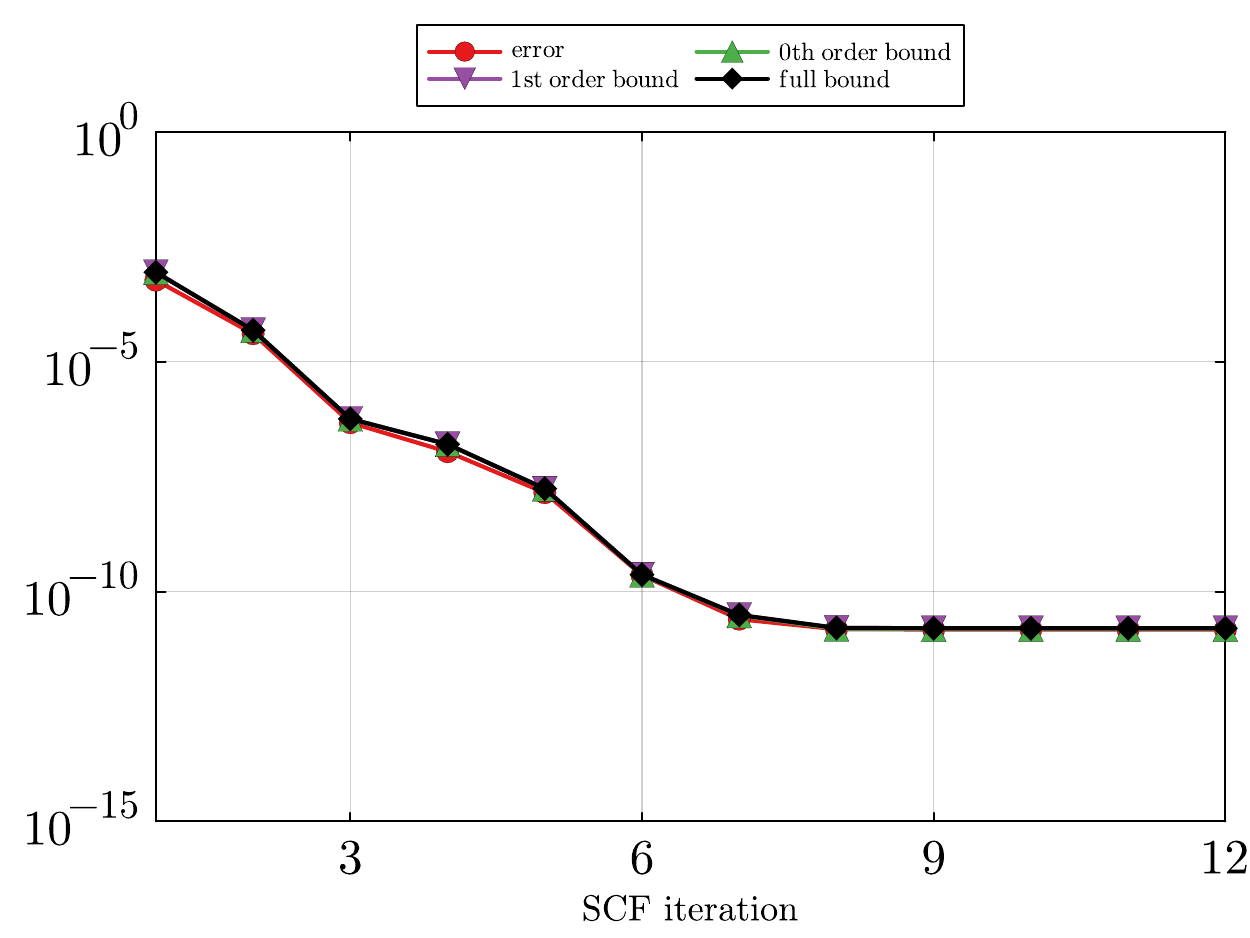}\hfill
    \includegraphics[width=0.49\linewidth]{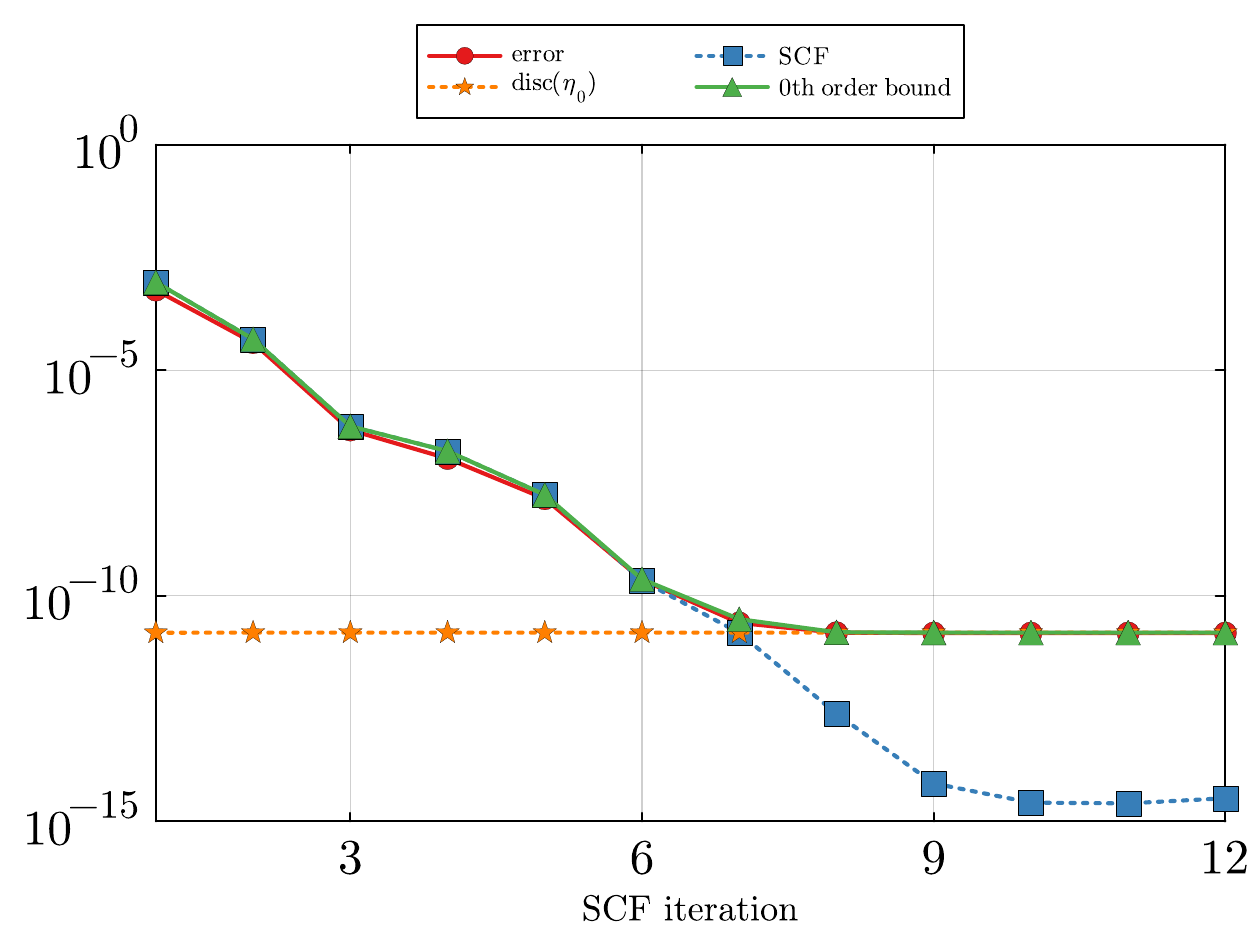}
    \caption{Tracking of the error $E(\gamma_{N,m}) - E(\gamma_\star)$ for a Si crystal (eight $\bm k$-points). (Left) Full-inversion bound with zeroth- and first-order approximations. (Right) Zeroth-order bound and its splitting between SCF and discretization contributions, as in \eqref{eq: full bound}.}
    \label{fig:Si_8kpt}
\end{figure}

\clearpage
\subsubsection{Hydrogen Fluoride (HF) molecule}
We run here a simulation for eight $\bm k$-points, with $\Ecutref=1500\ \Ha$, $\Ecut=750\ \Ha$ (we use the same trick to ensure convergence of the SCF as the one we used for the Silicon system). In Figure~\ref{fig:HF_8kpt}, we plot the results from the said simulation and in Table~\ref{tab:ratio_comparison} we display the ratio between the energy difference and the zeroth-order bound for the simulation with eight $\bm k$-points. We notice again that the zeroth-order approximation provides a very satisfying estimation of the true error, with a similar transition between the regime where the SCF error dominates and the one where the discretization error dominates. Note however that, while the full inversion is guaranteed, this time the zeroth-order bound is not, due to the truncation of the Neumann series.

\begin{figure}[h!]
    \centering
    \includegraphics[width=0.49\linewidth]{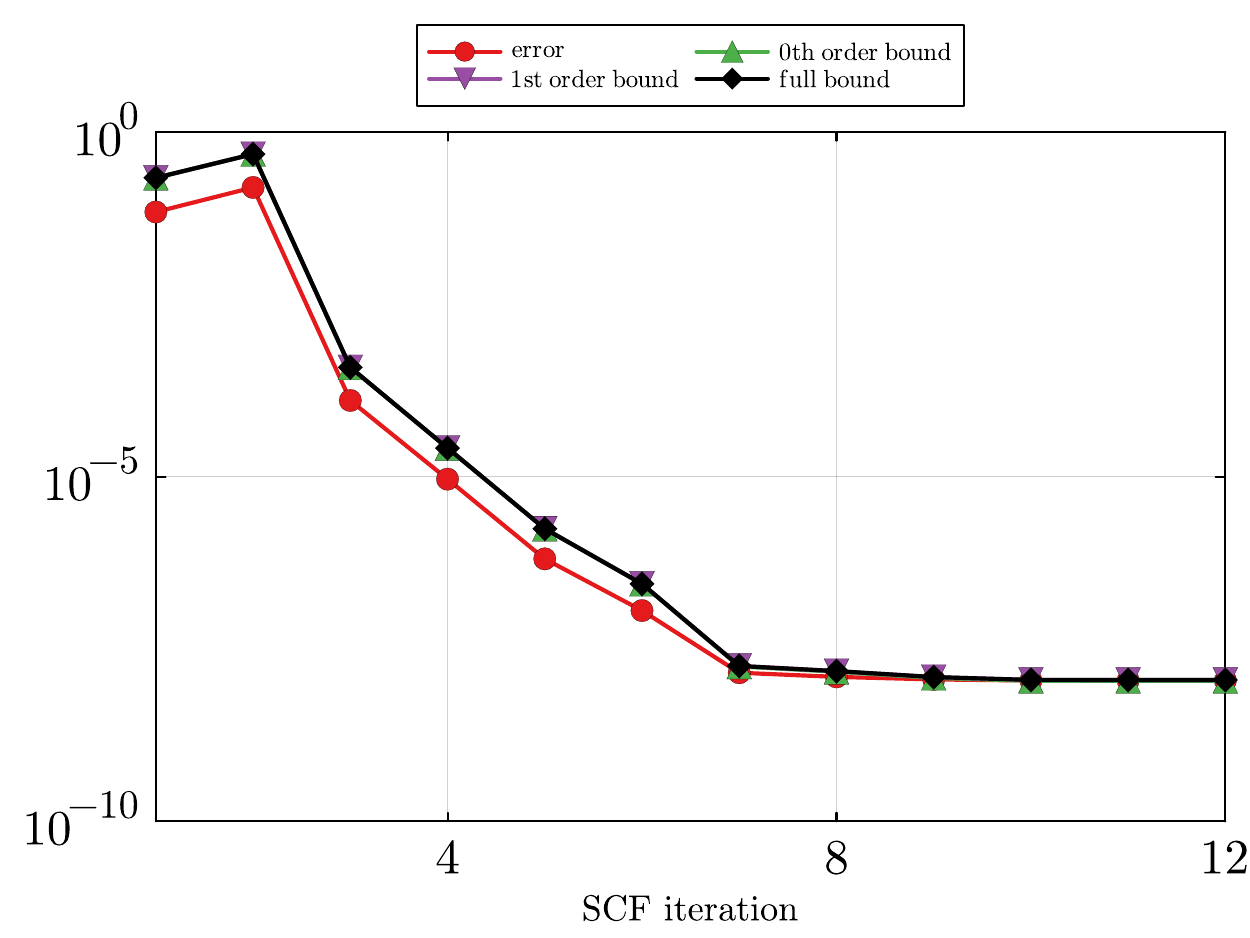}\hfill
    \includegraphics[width=0.49\linewidth]{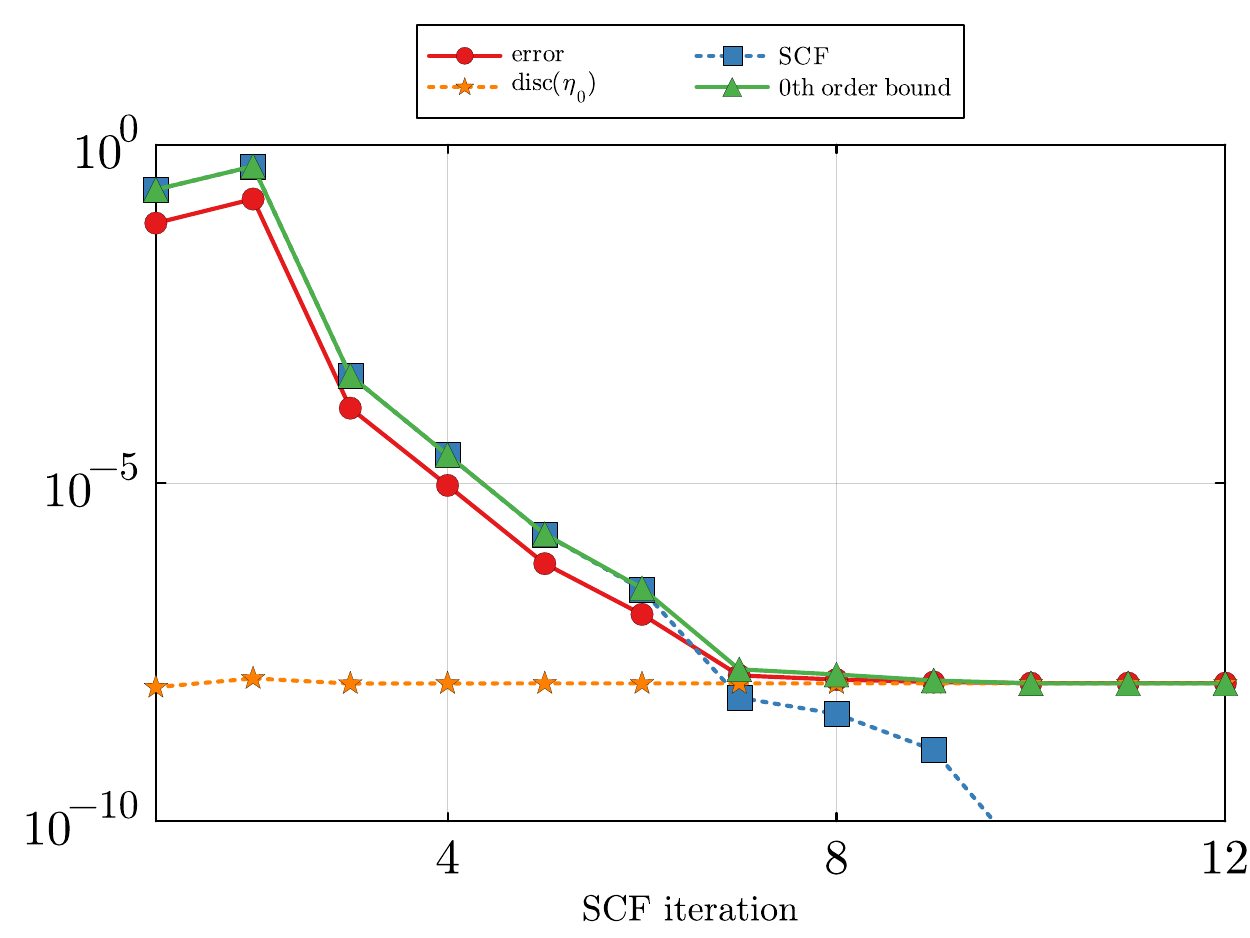}
    \caption{Tracking of the error $E(\gamma_{N,m}) - E(\gamma_\star)$ for a HF molecule (eight $\bm k$-points). (Left) Full-inversion bound with zeroth- and first-order approximations. (Right) Zeroth-order bound and its splitting between SCF and discretization contributions, as in \eqref{eq: full bound}.}
    \label{fig:HF_8kpt}
\end{figure}

\subsection{Usage of the bounds in practice and conclusion}

We would like to end this section with a brief discussion on the practical aspects of the bounds derived in this paper. Table \ref{tab:execution_time} summarizes the execution time for one of the simulations, which will be useful to decide which bound to employ.
First, the bound based on the full-inversion of the Hamiltonian is clearly the closest to the exact error.
Unfortunately, performing the full-inversion at each step is too costly to be used in practical simulations.
Next, the approximate inversions by means of a zeroth or first-order truncation seem to lead to very similar results. Hence, the zeroth-order approximation being the cheapest to compute, it seems the most relevant one. Indeed, even if it is in the finite dimensional space $\cV_N$, it is still necessary to perform a full operator inversion in this space to compute the first-order approximation: solving the underlying linear system is as costly as performing a full SCF cycle in the space $\cV_N$, which greatly reduces its applicability.
Finally, it appears that adding the estimation of the remainders from the truncation of the Neumann series worsen the bounds by more than one order of magnitude: mathematically guaranteed bounds seem to be only possible at the expanse of accuracy, a conclusion some of us also reached in a previous paper \cite{cancesPracticalErrorBounds2022} about discretization errors in the calculation of interatomic forces. These considerations therefore support, for practical applications, the choice of the zeroth-order bound, without the estimation of the remainders from the Neumann series.

\begin{remark}[Truncation of the Neumann series]
  Let us first rewrite the Neumann series in
  \eqref{eq:3-4} as
  \[
	  A^{-1}r_{i,N} =
    H_0^{-1/2}\biggr(\sum_{n=0}^{+\infty}\big(-H_{0}^{-1/2}WH_{0}^{-1/2}\big)^n
    \biggr)H_0^{-1/2}r_{i,N}.
  \]
  Since $B=-H_{0}^{-1/2}WH_{0}^{-1/2}$ is a compact, self-adjoint operator on
  $L^2_\per(\Omega)$, it admits an eigenbasis $(e_k)_{k\in\N}$, with associated
  eigenvalues $(\lambda_k)_{k\in\N}$ decreasing towards $0$ as $k\to+\infty$.
  Projecting $H_0^{-1/2}r_{i,N}$ in this eigenbasis, the zeroth order approximation error
  is nothing else than
  \begin{equation*}
    \begin{split}
      \norm{A^{-1}r_{i,N} - H_0^{-1}r_{i,N}}^2 &=
    %   \left\| H_0^{-1/2} \biggr(\sum_{n=0}^{+\infty}B^n\biggr)H_0^{-1/2}r_{i,N}
    % - H_0^{-1}r_{i,N}\right\|^2  =
      \left\| H_0^{-1/2} \biggr(\sum_{n=1}^{+\infty}B^n\biggr)H_0^{-1/2}r_{i,N}
      \right\|^2 \\ & \leq \norm{H_0^{-1/2}}^2 \sum_{n=1}^{+\infty}\sum_{k=1}^{+\infty}
      |\lambda_k|^{2n}|\langle e_k, H_0^{-1/2}r_{i,N} \rangle|^2  \\
      &= \norm{H_0^{-1/2}}^2\sum_{k=1}^{+\infty}\frac{\lambda_k^2}{|1-\lambda_k^2|} |\langle e_k,
      H_0^{-1/2}r_{i,N} \rangle|^2.
    \end{split}
  \end{equation*}
  In practice, we suspect that the zeroth and first order approximations give
  satisfactory results because the terms of the series converge relatively fast
  to 0: in the numerical experiments we realised in the 1D case, the above series has
  a value of about $10^{-9}$.

\end{remark}

\begin{table}[]
\centering
\caption{Time spent in each bound computation with respect to a single simulation for an HF molecule (eight $\bm k$-points). Results for the bound based on the full-inversion of the Hamiltonian ($\eta$), the zeroth-order truncation ($\eta_0$) and the first-order truncation ($\eta_1$).}
\label{tab:execution_time}
\begin{tabular}{ccc}
\toprule
           & Time(hr) & (\%) \\ \hline
$\eta$     & 3.45 & 32.5 \\
$\eta_0$   & 0.11 & 1.0  \\
$\eta_1$   & 1.47 & 13.8 \\
Simulation & 10.6 &      \\ \bottomrule
\end{tabular}
\end{table}

\subsection{Application to nonconvex exchange-correlation functionals (LDA and PBE) for Silicon}

Finally, we apply the results from this paper along the SCF iterations for a Si crystal modelled with the LDA \cite{perdewAccurateSimpleAnalytic1992} and PBE \cite{perdewGeneralizedGradientApproximation1996} exchange-correlation functionals, where we set $\Ecutref=400\ \Ha$, $\Ecut=50\ \Ha$, together with eight $ \bm k$-points.
In these cases, $E_{\rm xc}$ is not a convex functional of the density anymore, so the bounds cannot be expected to be guaranteed.
However, as the additional term is expected to be of higher order in the discretization error (see for instance \cite{cancesNumericalAnalysisPlanewave2012a,dussonPostprocessingPlanewaveApproximation2021} for the simple $X\alpha$ LDA functional), we can expect the approximation to hold asymptotically. This is confirmed by the numerical experiment in Figure~\ref{fig:Si_LDA}, where we notice a very good approximation of the error on the energy along the SCF iterations using the simple zeroth-order bound. In particular, the transition from a SCF dominating error to a discretization dominating error clearly appears, paving the way for efficient adaptive strategies, even for nonconvex density functionals.

\begin{figure}[h!]
    \centering
    \includegraphics[width=0.49\linewidth]{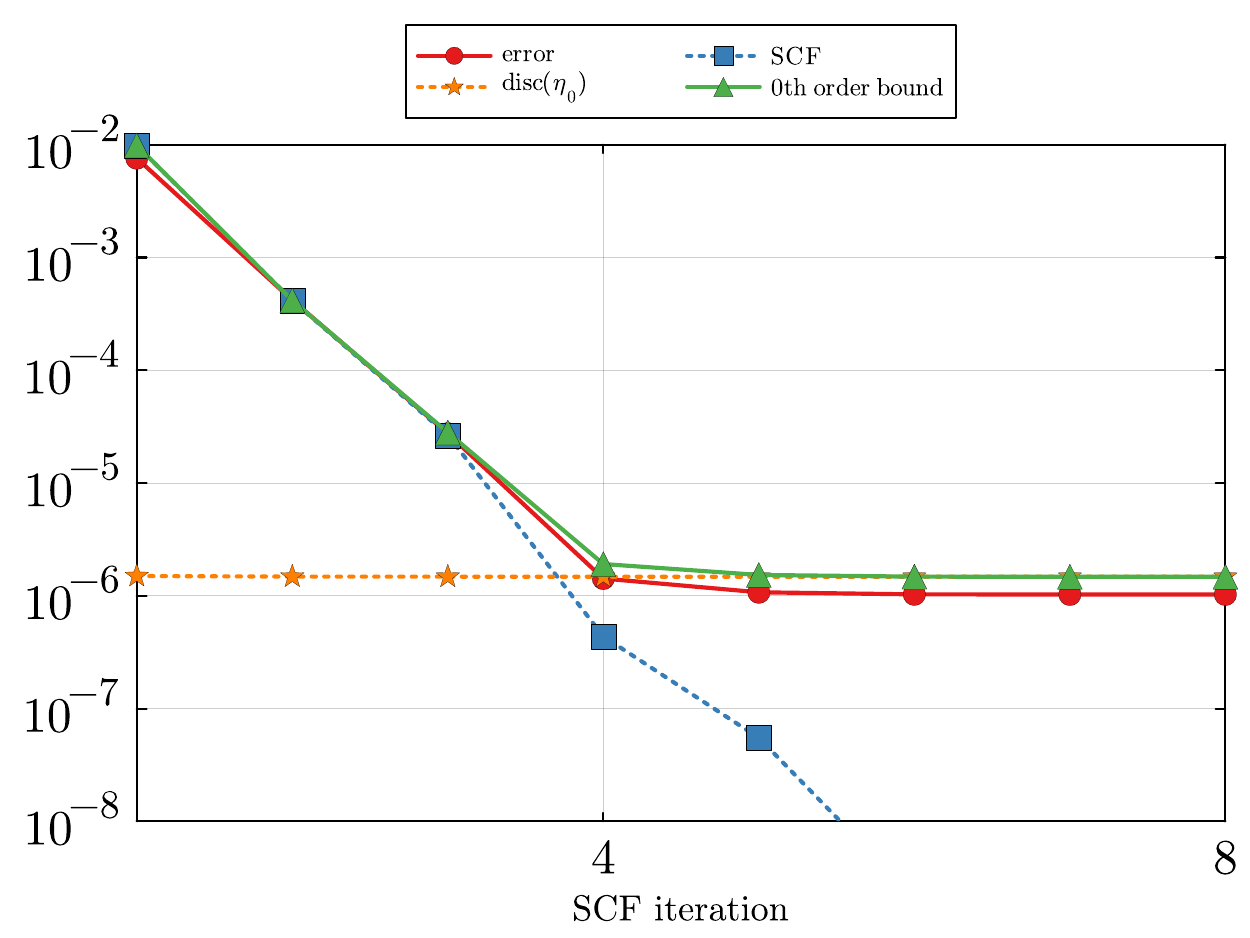}\hfill
    \includegraphics[width=0.49\linewidth]{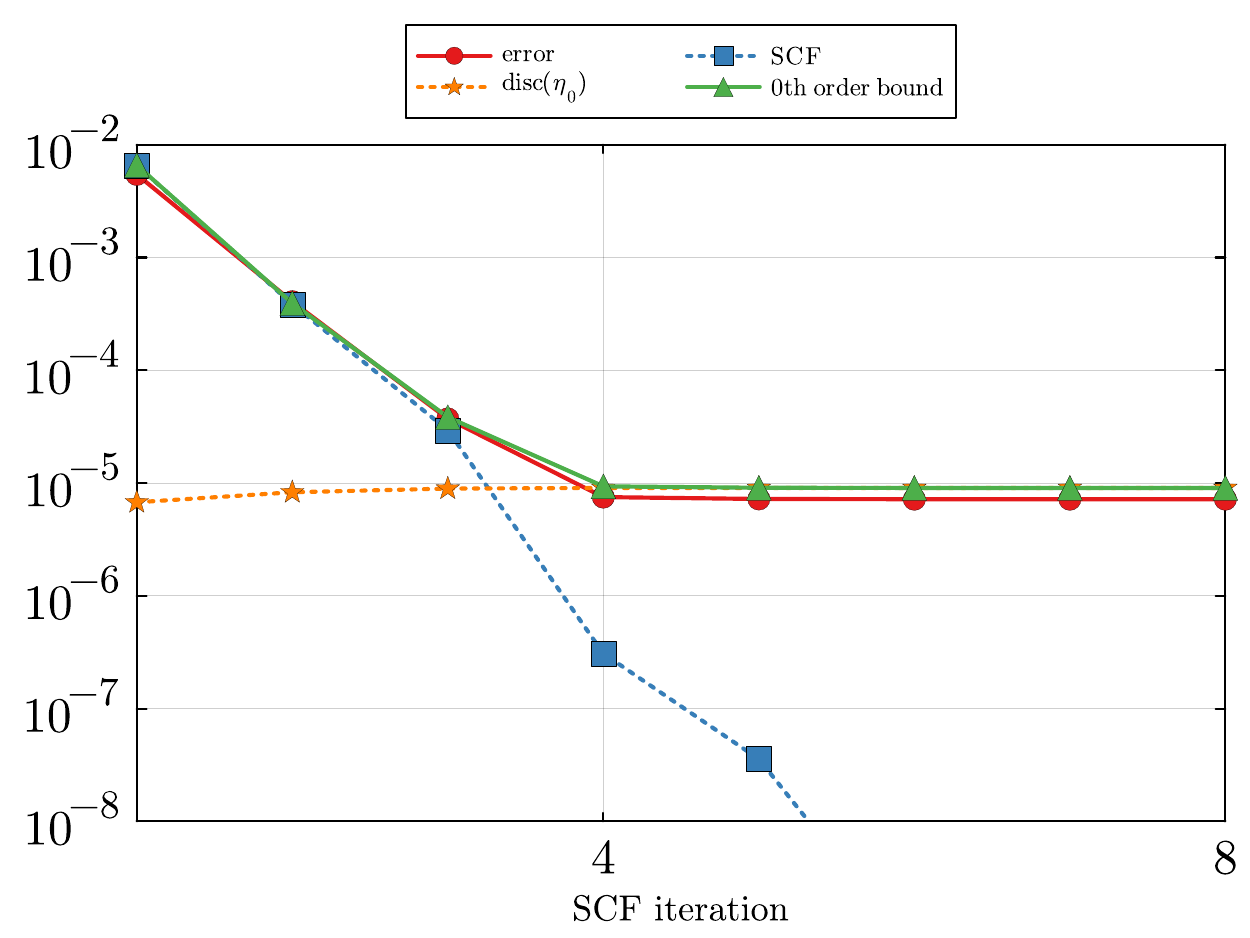}
    \caption{Tracking of the error $E(\gamma_{N,m}) - E(\gamma_\star)$ for a Si crystal with the zeroth-order bound. Data generated with eight $\bm k$-points and the LDA (left) or PBE (right) exchange-correlation functional. \enquote{\normalfont SCF} stands for the SCF error $\normalfont\err^{\rm SCF}_{N,m}$ and \enquote{$\normalfont{\rm disc}(\eta_0)$} stands for the discretization error $\normalfont\err^{\rm disc}_{N,m}$, computed with $\eta_0$ (cf. Table~\ref{tab:equation_reference_table}). The transition between a regime where the SCF error dominates to a regime where the discretization error dominates clearly appears, even though the bound is not mathematically guaranteed (nonconvex model and truncation of the Neumann series). }
    \label{fig:Si_LDA}
\end{figure}

\section*{Data availability}

All the codes used to generate the plots from this paper are available at \url{https://doi.org/10.18419/darus-4469}.

\bibliographystyle{siam}
\bibliography{refs}

\begin{thebibliography}{10}

\bibitem{bazleyLowerBoundsEigenvalues1961}
{\sc N.~W. Bazley and D.~W. Fox}, {\em Lower {{Bounds}} for {{Eigenvalues}} of {{Schr\"odinger}}'s {{Equation}}}, Physical Review, 124 (1961), pp.~483--492.

\bibitem{Born1927-ap}
{\sc M.~Born and R.~Oppenheimer}, {\em Zur quantentheorie der molekeln}, Ann. Phys., 389 (1927), pp.~457--484.

\bibitem{braessEquilibratedResidualError2009}
{\sc D.~Braess, V.~Pillwein, and J.~Sch{\"o}berl}, {\em Equilibrated residual error estimates are p-robust}, Computer Methods in Applied Mechanics and Engineering, 198 (2009), pp.~1189--1197.

\bibitem{cancesNumericalAnalysisPlanewave2012a}
{\sc E.~Canc{\`e}s, R.~Chakir, and Y.~Maday}, {\em Numerical analysis of the planewave discretization of some orbital-free and {{Kohn-Sham}} models}, ESAIM: Mathematical Modelling and Numerical Analysis, 46 (2012), pp.~341--388.

\bibitem{cancesNewApproachModeling2008}
{\sc {\'E}.~Canc{\`e}s, A.~Deleurence, and M.~Lewin}, {\em A {{New Approach}} to the {{Modeling}} of {{Local Defects}} in {{Crystals}}: {{The Reduced Hartree-Fock Case}}}, Communications in Mathematical Physics, 281 (2008), pp.~129--177.

\bibitem{cances_new_2008}
{\sc E.~Canc\'es, A.~Deleurence, and M.~Lewin}, {\em A {New} {Approach} to the {Modeling} of {Local} {Defects} in {Crystals}: {The} {Reduced} {Hartree}-{Fock} {Case}}, Communications in Mathematical Physics, 281 (2008), pp.~129--177.

\bibitem{cancesPracticalErrorBounds2022}
{\sc E.~Canc{\`e}s, G.~Dusson, G.~Kemlin, and A.~Levitt}, {\em Practical error bounds for properties in plane-wave electronic structure calculations}, SIAM Journal on Scientific Computing, 44 (2022), pp.~B1312--B1340.

\bibitem{Cances2016-vy}
{\sc E.~Canc{\`e}s, G.~Dusson, Y.~Maday, B.~Stamm, and M.~Vohral{\'\i}k}, {\em A perturbation-method-based post-processing for the planewave discretization of {Kohn--Sham} models}, J. Comput. Phys., 307 (2016), pp.~446--459.

\bibitem{cancesGuaranteedRobustPosteriori2017}
{\sc E.~Canc{\`e}s, G.~Dusson, Y.~Maday, B.~Stamm, and M.~Vohral{\'i}k}, {\em Guaranteed and {{Robust}} a {{Posteriori Bounds}} for {{Laplace Eigenvalues}} and {{Eigenvectors}}: {{Conforming Approximations}}}, SIAM Journal on Numerical Analysis, 55 (2017), pp.~2228--2254.

\bibitem{cancesGuaranteedRobustPosteriori2018}
\leavevmode\vrule height 2pt depth -1.6pt width 23pt, {\em Guaranteed and robust a posteriori bounds for {{Laplace}} eigenvalues and eigenvectors: A unified framework}, Numerische Mathematik, 140 (2018), pp.~1033--1079.

\bibitem{cancesGuaranteedPosterioriBounds2020a}
\leavevmode\vrule height 2pt depth -1.6pt width 23pt, {\em Guaranteed a posteriori bounds for eigenvalues and eigenvectors: Multiplicities and clusters}, Mathematics of Computation,  (2020).

\bibitem{cancesPostprocessingPlanewaveApproximation2021}
\leavevmode\vrule height 2pt depth -1.6pt width 23pt, {\em Post-processing of the planewave approximation of {{Schr{\"o}dinger}} equations. {{Part I}}: Linear operators}, IMA Journal of Numerical Analysis, 41 (2021), pp.~2423--2455.

\bibitem{cancesNumericalQuadratureBrillouin2020}
{\sc E.~Canc{\`e}s, V.~Ehrlacher, D.~Gontier, A.~Levitt, and D.~Lombardi}, {\em Numerical quadrature in the {{Brillouin}} zone for periodic {{Schr{\"o}dinger}} operators}, Numerische Mathematik, 144 (2020), pp.~479--526.

\bibitem{cancesConvergenceAnalysisDirect2021a}
{\sc E.~Canc{\`e}s, G.~Kemlin, and A.~Levitt}, {\em Convergence {{Analysis}} of {{Direct Minimization}} and {{Self-Consistent Iterations}}}, SIAM Journal on Matrix Analysis and Applications, 42 (2021), pp.~243--274.

\bibitem{cancesPrioriErrorAnalysis2024}
\leavevmode\vrule height 2pt depth -1.6pt width 23pt, {\em A {{Priori Error Analysis}} of {{Linear}} and {{Nonlinear Periodic Schr{\"o}dinger Equations}} with {{Analytic Potentials}}}, Journal of Scientific Computing, 98 (2024), p.~25.

\bibitem{cances_convergence_2000}
{\sc E.~Canc\`es and C.~Le~Bris}, {\em On the convergence of {SCF} algorithms for the {Hartree}-{Fock} equations}, ESAIM: Mathematical Modelling and Numerical Analysis, 34 (2000), pp.~749--774.

\bibitem{cancesNumericalMethodsKohn2023}
{\sc E.~Canc{\`e}s, A.~Levitt, Y.~Maday, and C.~Yang}, {\em Numerical {{Methods}} for {{Kohn}}--{{Sham Models}}: {{Discretization}}, {{Algorithms}}, and {{Error Analysis}}}, in Density {{Functional Theory}}: {{Modeling}}, {{Mathematical Analysis}}, {{Computational Methods}}, and {{Applications}}, E.~Canc{\`e}s and G.~Friesecke, eds., Springer International Publishing, Cham, 2023, pp.~333--400.

\bibitem{carstensenGuaranteedLowerBounds2014}
{\sc C.~Carstensen and J.~Gedicke}, {\em Guaranteed lower bounds for eigenvalues}, Mathematics of Computation, 83 (2014), pp.~2605--2629.

\bibitem{Chen2014-qu}
{\sc H.~Chen, X.~Dai, X.~Gong, L.~He, and A.~Zhou}, {\em Adaptive finite element approximations for kohn--sham models}, Multiscale Model. Simul., 12 (2014), pp.~1828--1869.

\bibitem{destuynderExplicitErrorBounds1999}
{\sc P.~Destuynder and B.~M{\'e}tivet}, {\em Explicit error bounds in a conforming finite element method}, Mathematics of Computation, 68 (1999), pp.~1379--1396.

\bibitem{duranPosterioriErrorEstimates2003}
{\sc R.~G. Dur{\'a}n, C.~Padra, and R.~Rodr{\'i}guez}, {\em A {{Posteriori Error Estimates}} for the {{Finite Element Approximation}} of {{Eigenvalue Problems}}}, Mathematical Models and Methods in Applied Sciences, 13 (2003), pp.~1219--1229.

\bibitem{dussonPostprocessingPlanewaveApproximation2021}
{\sc G.~Dusson}, {\em Post-processing of the plane-wave approximation of {{Schr{\"o}dinger}} equations. {{Part II}}: {{Kohn}}--{{Sham}} models}, IMA Journal of Numerical Analysis, 41 (2021), pp.~2456--2487.

\bibitem{dussonPosterioriAnalysisNonlinear2017}
{\sc G.~Dusson and Y.~Maday}, {\em A posteriori analysis of a nonlinear {{Gross}}\textendash{{Pitaevskii-type}} eigenvalue problem}, IMA Journal of Numerical Analysis, 37 (2017), pp.~94--137.

\bibitem{DUSSON2023112352}
{\sc G.~Dusson and Y.~Maday}, {\em An overview of a posteriori error estimation and post-processing methods for nonlinear eigenvalue problems}, Journal of Computational Physics, 491 (2023), p.~112352.

\bibitem{ernPolynomialDegreeRobustPosterioriEstimates2015}
{\sc A.~Ern and M.~Vohral{\'i}k}, {\em Polynomial-{{Degree-Robust A Posteriori Estimates}} in a {{Unified Setting}} for {{Conforming}}, {{Nonconforming}}, {{Discontinuous Galerkin}}, and {{Mixed Discretizations}}}, SIAM Journal on Numerical Analysis, 53 (2015), pp.~1058--1081.

\bibitem{forsytheAsymptoticLowerBounds1954}
{\sc G.~E. Forsythe}, {\em Asymptotic lower bounds for the frequencies of certain polygonal membranes}, Pacific Journal of Mathematics, 4 (1954), pp.~467--480.

\bibitem{goedecker1996separable}
{\sc S.~Goedecker, M.~Teter, and J.~Hutter}, {\em Separable dual-space {{Gaussian}} pseudopotentials}, Physical Review B, 54 (1996), p.~1703.

\bibitem{gontierConvergenceRatesSupercell2016}
{\sc D.~Gontier and S.~Lahbabi}, {\em Convergence rates of supercell calculations in the reduced {{Hartree}}-{{Fock}} model}, ESAIM: Mathematical Modelling and Numerical Analysis, 50 (2016), pp.~1403--1424.

\bibitem{hartwigsen1998relativistic}
{\sc C.~Hartwigsen, S.~G{\oe}decker, and J.~Hutter}, {\em Relativistic separable dual-space gaussian pseudopotentials from h to rn}, Physical Review B, 58 (1998), p.~3641.

\bibitem{herbstPosterioriErrorEstimation2020}
{\sc M.~F. Herbst, A.~Levitt, and E.~Canc{\`e}s}, {\em A posteriori error estimation for the non-self-consistent {{Kohn}}\textendash{{Sham}} equations}, Faraday Discussions, 224 (2020), pp.~227--246.

\bibitem{herbstDFTKJulianApproach2021}
\leavevmode\vrule height 2pt depth -1.6pt width 23pt, {\em {{DFTK}}: {{A Julian}} approach for simulating electrons in solids}, Proceedings of the JuliaCon Conferences, 3 (2021), p.~69.

\bibitem{huLowerBoundsEigenvalues2014}
{\sc J.~Hu, Y.~Huang, and Q.~Lin}, {\em Lower {{Bounds}} for {{Eigenvalues}} of {{Elliptic Operators}}: {{By Nonconforming Finite Element Methods}}}, Journal of Scientific Computing, 61 (2014), pp.~196--221.

\bibitem{huLowerUpperBound2014}
{\sc J.~Hu, Y.~Huang, and Q.~Shen}, {\em The {{Lower}}/{{Upper Bound Property}} of {{Approximate Eigenvalues}} by {{Nonconforming Finite Element Methods}} for {{Elliptic Operators}}}, Journal of Scientific Computing, 58 (2014), pp.~574--591.

\bibitem{katoUpperLowerBounds1949}
{\sc T.~Kato}, {\em On the {{Upper}} and {{Lower Bounds}} of {{Eigenvalues}}}, Journal of the Physical Society of Japan, 4 (1949), pp.~334--339.

\bibitem{kleinmanEfficaciousFormModel1982}
{\sc L.~Kleinman and D.~M. Bylander}, {\em Efficacious {{Form}} for {{Model Pseudopotentials}}}, Physical Review Letters, 48 (1982), pp.~1425--1428.

\bibitem{Kohn1965}
{\sc W.~Kohn and L.~J. Sham}, {\em Self-consistent equations including exchange and correlation effects}, Physical Review, 140 (1965), pp.~A1133--A1138.

\bibitem{ladevezeErrorEstimateProcedure1983}
{\sc P.~Ladeveze and D.~Leguillon}, {\em Error {{Estimate Procedure}} in the {{Finite Element Method}} and {{Applications}}}, SIAM Journal on Numerical Analysis, 20 (1983), pp.~485--509.

\bibitem{larsonPosterioriPrioriError2000}
{\sc M.~G. Larson}, {\em A {{Posteriori}} and a {{Priori Error Analysis}} for {{Finite Element Approximations}} of {{Self-Adjoint Elliptic Eigenvalue Problems}}}, SIAM Journal on Numerical Analysis, 38 (2000), pp.~608--625.

\bibitem{levitt_screening_2020}
{\sc A.~Levitt}, {\em {Screening in the Finite-Temperature Reduced Hartree–Fock Model}}, Arch Rational Mech Anal, 238 (2020), pp.~901--927.

\bibitem{liebDensityFunctionalsCoulomb1983}
{\sc E.~H. Lieb}, {\em Density functionals for coulomb systems}, International Journal of Quantum Chemistry, 24 (1983), pp.~243--277.

\bibitem{Lin2016-fn}
{\sc L.~Lin and B.~Stamm}, {\em A posteriori error estimates for discontinuous galerkin methods using non-polynomial basis functions. part {II}: Eigenvalue problems}, Esaim Math. Model. Numer. Anal.,  (2016).

\bibitem{liuFrameworkVerifiedEigenvalue2015}
{\sc X.~Liu}, {\em A framework of verified eigenvalue bounds for self-adjoint differential operators}, Applied Mathematics and Computation, 267 (2015), pp.~341--355.

\bibitem{luoComputingLowerUpper2012}
{\sc F.~Luo, Q.~Lin, and H.~Xie}, {\em Computing the lower and upper bounds of {{Laplace}} eigenvalue problem: By combining conforming and nonconforming finite element methods}, Science China Mathematics, 55 (2012), pp.~1069--1082.

\bibitem{Maday2003-eb}
{\sc Y.~Maday and G.~Turinici}, {\em Error bars and quadratically convergent methods for the numerical simulation of the {Hartree-Fock} equations}, Numer. Math., 94 (2003), pp.~739--770.

\bibitem{monkhorstSpecialPointsBrillouinzone1976}
{\sc H.~J. Monkhorst and J.~D. Pack}, {\em Special points for {{Brillouin-zone}} integrations}, Physical Review B, 13 (1976), pp.~5188--5192.

\bibitem{Motamarri2012-fd}
{\sc P.~Motamarri, M.~R. Nowak, K.~Leiter, J.~Knap, and V.~Gavini}, {\em Higher-order adaptive finite-element methods for {{Kohn}}--{{Sham}} density functional theory}, Journal of Computational Physics, 253 (2013), pp.~308--343.

\bibitem{nakaoNumericalVerificationMethods2019a}
{\sc M.~T. Nakao, M.~Plum, and Y.~Watanabe}, {\em Numerical Verification Methods and Computer-Assisted Proofs for Partial Differential Equations}, no.~53 in Springer Series in Computational Mathematics, {Springer}, {Singapore}, 2019.

\bibitem{nottoliRobustOpensourceImplementation2023}
{\sc T.~Nottoli, I.~Giann{\`i}, A.~Levitt, and F.~Lipparini}, {\em A robust, open-source implementation of the locally optimal block preconditioned conjugate gradient for large eigenvalue problems in quantum chemistry}, Theoretical Chemistry Accounts, 142 (2023), p.~69.

\bibitem{perdewGeneralizedGradientApproximation1996}
{\sc J.~P. Perdew, K.~Burke, and M.~Ernzerhof}, {\em Generalized {{Gradient Approximation Made Simple}}}, Physical Review Letters, 77 (1996), pp.~3865--3868.

\bibitem{perdewAccurateSimpleAnalytic1992}
{\sc J.~P. Perdew and Y.~Wang}, {\em Accurate and simple analytic representation of the electron-gas correlation energy}, Physical Review B, 45 (1992), pp.~13244--13249.

\bibitem{pragerApproximationsElasticityBased1947}
{\sc W.~Prager and J.~L. Synge}, {\em Approximations in elasticity based on the concept of function space}, Quarterly of Applied Mathematics, 5 (1947), pp.~241--269.

\bibitem{ReedFourierAnalysis1975}
{\sc M.~Reed and B.~Simon}, {\em Fourier Analysis, Self-Adjointness}, no.~2 in Methods of {{Modern Mathematical Physics}}, Academic Press, 1975.

\bibitem{reedAnalysisOperators1978}
\leavevmode\vrule height 2pt depth -1.6pt width 23pt, {\em Analysis of Operators}, no.~4 in Methods of {{Modern Mathematical Physics}}, Academic Press, 1978.

\bibitem{Solovej_1991}
{\sc J.~Solovej}, {\em {Proof of the ionization conjecture in a reduced Hartree-Fock model}}, Invent. Math., 104 (1991), pp.~291--311.

\bibitem{Toulouse2023}
{\sc J.~Toulouse}, {\em Review of Approximations for the Exchange-Correlation Energy in Density-Functional Theory}, Springer International Publishing, Cham, 2023, pp.~1--90.

\bibitem{troullierEfficientPseudopotentialsPlanewave1991}
{\sc N.~Troullier and J.~L. Martins}, {\em Efficient pseudopotentials for plane-wave calculations}, Physical Review B, 43 (1991), pp.~1993--2006.

\bibitem{Valone1980}
{\sc S.~Valone}, {\em A one-to-one mapping between one-particle densities and some {N}-particle ensembles}, The Journal of Chemical Physics, 73 (1980), pp.~4653--4655.

\bibitem{weinbergerUpperLowerBounds1956}
{\sc H.~F. Weinberger}, {\em Upper and lower bounds for eigenvalues by finite difference methods}, Communications on Pure and Applied Mathematics, 9 (1956), pp.~613--623.

\bibitem{yanGradientRecoveryType2001}
{\sc N.~Yan and A.~Zhou}, {\em Gradient recovery type a posteriori error estimates for finite element approximations on irregular meshes}, Computer Methods in Applied Mechanics and Engineering, 190 (2001), pp.~4289--4299.

\bibitem{yangEigenfunctionBehaviorAdaptive2021}
{\sc B.~Yang and A.~Zhou}, {\em Eigenfunction behavior and adaptive finite element approximations of nonlinear eigenvalue problems in quantum physics}, ESAIM: Mathematical Modelling and Numerical Analysis, 55 (2021), pp.~209--227.

\bibitem{Zhang2017-ok}
{\sc G.~Zhang, L.~Lin, W.~Hu, C.~Yang, and J.~E. Pask}, {\em Adaptive local basis set for {Kohn–Sham} density functional theory in a discontinuous galerkin framework {II}: Force, vibration, and molecular dynamics calculations}, J. Comput. Phys., 335 (2017), pp.~426--443.

\end{thebibliography}

\appendix

\section{Periodic potentials and Brillouin zone discretization}\label{app:brillouin_zone}

In this appendix, we explain how to extend the bounds developed in the rest of the paper to the case of Kohn--Sham equations with periodic potentials and Brillouin zone discretization, as explained in Remark~\ref{rmk:brillouin}. Note that, in practice, a finite set of $\bm k$-points is used (typically, a uniform Monkhorst--Pack grid \cite{monkhorstSpecialPointsBrillouinzone1976}), introducing a numerical quadrature error when integrating over the Brillouin zone. Therefore, all the integrals on the Brillouin zone appearing in this appendix are replaced by quadrature rules: this discretization error is not taken into account here. The interested reader is referred for instance to \cite{cancesNumericalQuadratureBrillouin2020} for the \emph{a priori} numerical analysis of such quadrature methods.

First, let us recall some basic properties of the Bloch--Floquet transform (see for instance \cite[Section XIII.16]{reedAnalysisOperators1978} and \cite{levitt_screening_2020} for more details). For a generic operator $A$ acting on $L^2(\R^3;\C)$ which commutes with $\cR$-translations, the Bloch--Floquet transform $\cZ$ gives the decomposition
\begin{equation}
\cZ^*A\cZ = \fint_{\cB} A_{\bm k}{\rm d}\bm k,
\end{equation}
where $\fint_{\cB} = \frac1{|\cB|}\int_\cB$ is the averaged integral over the Brillouin zone $\cB$ (defined as the first Voronoï cell of the reciprocal lattice $\cR^*$), $(A_{\bm k})_{\bm k\in\cB}$ is the set of the Bloch fibers of $A$ and each $A_{\bm k}$ is an operator acting on $L^2_\per(\Omega)$. If the operator $A$ is locally trace-class, then its \emph{trace per unit cell} is given by
\begin{equation}
\trvo(A) = \fint_{\cB}\tr(A_{\bm k}){\rm d}\bm k,
\end{equation}
where here and below $\tr$ denotes the trace of operators on $\cH=L^2_\per(\Omega)$.
We now change the density matrix manifold $\cM$ defined in \eqref{eq:cM} to
\begin{multline}
    \cMvo \coloneqq \Big\{ \gamma \in \cS(L^2(\R^3)),\;\gamma^2=\gamma,\; \gamma\tau_{\bm R} = \tau_{\bm R}\gamma\ \forall\ \bm R\in\cR,\\ \trvo(\gamma)=\Ne,\; \trvo(-\Delta\gamma) < +\infty\Big\},
\end{multline}
where, for a given vector $\bm R\in\cR$, $\tau_{\bm R}$ is the translation operator on $L^2(\R^3;\C)$ defined by
\[
\forall\ \phi\in L^2(\R^3;\C),\ (\tau_{\bm R}\varphi)(\bm x) = \varphi(\bm x - \bm R) \text{ for a.e. } \bm x\in\R^3,
\]
and where $\Ne$ is the number of electrons \emph{per unit cell}. The conditions $\;\gamma^2=\gamma$, $\gamma\tau_{\bm R} = \tau_{\bm R}\gamma$ for all $\bm R\in\cR$, and $\trvo(\gamma)=\Ne$, imply that the Bloch decomposition of $\gamma$ is given by
\begin{equation}
\cZ^*\gamma\cZ = \fint_{\cB}\gamma_{\bm k}{\rm d}\bm k \quad \mbox{with} \quad  \gamma_{\bm k}= \sum_{i=1}^{N_{\bm k}}
|\phi_{i,\bm k}\rangle\langle \phi_{i,\bm k}|,
\end{equation}
where the function $\cB \ni \bm k \mapsto N_{\rm k} \in \N$ is integrable and such that
\[
    \fint_\cB N_{\bm k}{\rm d} \bm k = \Ne,
\]
and where $( \phi_{i,\bm k})_{1 \le i \le N_{\bm k}}$ forms an orthonormal family of $L^2_\per(\Omega)$. The term $\trvo(-\Delta\gamma)$ is defined in the same spirit of \eqref{def: weak trace}. As
\[
 \cZ^* \left( -\Delta\right) \cZ =   \fint_{\cB} (-\i\nabla + \bm k)^2{\rm d}\bm k ,
\]
we can generalize the notion of trace as in  \eqref{def: weak trace} and obtain
$$
\trvo(-\Delta\gamma):= \fint_{\cB} \tr\left((-\i\nabla + \bm k)^2 \gamma_{\bm k}\right) {\rm d}\bm k
= \fint_{\cB} \sum_{i=1}^{N_{\bm k}} \| (-\i\nabla + \bm k) \phi_{i,\bm k}\|_{L^2_\per(\Omega)}^2  {\rm d}\bm k.
$$

Next, assuming that the potential $V$ is in $L^2_{\rm per}(\Omega;\R)$, we can consider $h = -\frac12\Delta + V$ as a self-adjoint operator acting on $L^2(\R^3;\C)$, with domain $H^2(\R^3;\C)$ and form domain $H^1(\R^3;\C)$, that commute with every $\cR$-translations and is therefore decomposed by the Bloch transform
\begin{equation}
\cZ^*h\cZ = \fint_{\cB}h_{\bm k}{\rm d}\bm k,
\end{equation}
where $h_{\bm k} = \frac12(-\i\nabla + \bm k)^2 + V$ is an operator on $L^2_\per(\Omega)$ with domain $H^2_\per(\Omega)$ and form domain $H^2_\per(\Omega)$.
Using the generalized trace notation \eqref{def: weak trace}, we define
\begin{equation}
    \trvo(h\gamma) \coloneqq \fint_{\cB} \tr(h_{\bm k}\gamma_{\bm k}) {\rm d}\bm k,
\end{equation}
and we can write the \emph{energy per unit cell} as
\begin{equation}
    \Evo(\gamma) \coloneqq \trvo(h\gamma) + F(\rho_\gamma),
\end{equation}
where the density $\rho_\gamma$ is computed as
\begin{equation}
\rho_\gamma(\bm x) = \fint_{\cB} \gamma_{\bm k}(\bm x,\bm x){\rm d}\bm k =
\fint_{\cB} \sum_{i=1}^{N_{\bm k}} |\phi_{i,\bm k}(\bm x)|^2{\rm d}\bm k.
\end{equation}
Note that, by arguments similar to those used in Section~\ref{sec:dm}, $\rho_\gamma$ belongs to $L^2_\per(\Omega;\R)$.
The nonlinear term $F(\rho_\gamma)$ is then defined for instance by \eqref{eq:KS_nonlin} in Kohn--Sham DFT.

With such a framework, one can easily extend the main results to the Kohn--Sham equations with periodic potentials and Brillouin zone discretization. Indeed, we have
\begin{equation}
    \begin{split}
        \forall\ \gamma_1,\gamma_2\in\cMvo,\ \langle F'(\rho_{\gamma_1}), \rho_{\gamma_2}\rangle & = \int_\Omega V_{\rho_{\gamma_1}} \rho_{\gamma_2}  = \fint_\cB \tr(V_{\rho_{\gamma_1}}\gamma_{2,\bm k}){\rm d}\bm k = \trvo(V_{\rho_{\gamma_1}}\gamma_2).
    \end{split}
\end{equation}
The Bloch fibers of the Kohn--Sham Hamiltonian are then given by $H_{\rho,\bm k} = h_{\bm k} + V_{\rho} = \frac12(-\i\nabla + \bm k)^2 + V + V_{\rho}$, and have domain $H^2_\per(\Omega)$ and form domain $H^1_\per(\Omega)$. Note that the Kohn--Sham equations with periodic potentials \eqref{eq:KS-DFT-per} are obtained as the Euler--Lagrange equations of the minimisation problem
\begin{equation}\label{eq:min_pb_bz}
\min_{\gamma\in\cMvo} \Evo(\gamma).
\end{equation}
Existence and uniqueness of a minimizer to this problem is studied for instance in \cite[Theorem 1]{cances_new_2008} for the rHF model. A crucial assumption in our approach is that the band gap $\nu$ is positive, where $\nu = \min_{\bm k\in\cB} \varepsilon_{N_{\bm k}+1,\bm k} - \max_{\bm k\in\cB} \varepsilon_{N_{\bm k},\bm k}$ with $(\varepsilon_{i,\bm k})_{i\in\N}$ the eigenvalues of the Bloch fiber $H_{\rho_{\gamma},\bm k}$. We make this assumption in the sequel. From a physical point of view, this means that the system under consideration is an \emph{insulator}. In this case, it holds $N_{\bm k} = \Ne$ for a.e. ${\bm k}\in\cB$.

Assuming that $F$ is convex, everything then follows similarly to the computations from Section~\ref{sec:3} by (i) replacing the $L^2_\per(\Omega)$ trace $\tr$ with the trace per unit cell $\trvo$ and (ii) replacing the real number $\mu\in\R$ with a function $\mu: \cB \ni \bm k \mapsto \mu_{\bm k} \in \R$. Then, for fixed $\gamma_1,\gamma_2\in\cMvo$, if we are able to find a function $\mu$ such that,
\begin{equation}\label{eq:mu_brillouin}
    \trvo\big((h + V_{\rho_{\gamma_2}} - \mu)\gamma_1\big)\geq 0,
\end{equation}
then the extension of Corollary~\ref{cor:bound} to the energy per unit cell reads
\begin{equation}
   \Evo(\gamma_2) - \Evo(\gamma_1) \leq \trvo\big((h + V_{\rho_{\gamma_2}} - \mu)\gamma_2\big).
\end{equation}
In order for \eqref{eq:mu_brillouin} to hold, since
\begin{equation}
    \trvo\big((h + V_{\rho_{\gamma_2}} - \mu)\gamma_1\big) = \fint_\cB \tr\big((h_{\bm k} + V_{\rho_{\gamma_2}} - \mu_{\bm k}) \gamma_{1,\bm k} \big){\rm d}\bm k,
\end{equation}
it is sufficient to compute $(\mu_{\bm k})_{\bm k\in\cB}$ such that
\begin{equation}
    \forall\ \bm k\in\cB,\ \tr\big((h_{\bm k} + V_{\rho_{\gamma_2}} - \mu_{\bm k})\gamma_{1,\bm k}\big) \geq 0.
\end{equation}
Therefore, one can apply the strategy presented in Section~\ref{sec:guaranteed} to each Bloch fibers $h_{\bm k} + V_{\rho_{\gamma_2}}$ to compute an admissible function $\mu$. This ultimately amounts to compute (or approximate) an $\eta_{\bm k}$ from \eqref{eq:eta_energy} for every $\bm k$ in $\cB$.
Everything we presented to compute this quantity can therefore be applied for every $\bm k$ in $\cB$, or in practice for every point of the grid used to discretize the integrals over $\cB$.

From a practical point of view, the Kohn--Sham equation with Brillouin zone discretization \eqref{eq:KS-DFT-per} are also solved with SCF algorithms, except that each Bloch fiber $H_{\rho,\bm k}$ has to be diagonalized and the density rebuilt by integrating over the Brillouin zone. A similar splitting between discretization error and SCF error, as in \eqref{eq: full bound}, can thus also be derived: the guaranteed bound on the energy per unit cell then reads, at iteration $m$ of the SCF in the space $\cV_N$,
\begin{equation}
    	\boxed{\Evo(\gamma_{N,m}) - \Evo(\gamma_\star) \le\err_{N,m}^{\rm disc} + \err_{N,m}^{\rm SCF},}
\end{equation}
where each error component has to be computed fiber-wise. In other words,
\begin{equation}
    \begin{split}
        \err_{N,m}^{\rm disc} &= \trvo\big((H_{\rho_{\gamma_{N,m}}} - \mu^{\rm lb}_{N,m+1})\gamma_{N,m+1} \big) \\ &=
        \fint_{\cB} \tr\big((H_{\rho_{\gamma_{N,m}},\bm k} - \mu^{\rm lb}_{N,m+1,\bm k})\gamma_{N,m+1,\bm k} \big){\rm d}\bm k, \\
        \err_{N,m}^{\rm SCF}  &= \trvo\big(H_{\rho_{\gamma_{N,m}}}\gamma_{N,m}\big) - \trvo\big(H_{\rho_{\gamma_{N,m}}}\gamma_{N,m+1}\big) \\ & = \fint_\cB \tr\big(H_{\rho_{\gamma_{N,m}},\bm k}\gamma_{N,m,\bm k}\big) - \tr\big(H_{\rho_{\gamma_{N,m}},\bm k}\gamma_{N,m+1,\bm k}\big){\rm d}\bm k.
    \end{split}
\end{equation}

\section{Neumann series truncation error}\label{app:opnorm}

In this section we provide a way to estimate  \(\|H_{0}^{-1}W\|\). As before we make use of the decomposition $\cH=\cV_N\oplus \cV_N^\perp$, and write
\begin{equation}
	\|H_{0}^{-1}W\| \leq \|H_{0}^{-1}\Pi_N W\| + \|H_{0}^{-1}\Pi_N^\perp W\|.
\end{equation}
The second term can be easily estimated as follows:
\begin{equation}
	\begin{split}
		\|H_{0}^{-1}\Pi_N^\perp W\| \leq &
		 \left\|\left(-\frac{1}{2}\Delta + \langle V\rangle\right)^{-1}\Pi_N^\perp (V-\langle V \rangle)\Pi_N^\perp \right\| \\
      + & \left\|\left(-\frac{1}{2}\Delta + \langle V\rangle\right)^{-1}\Pi_N^\perp V \Pi_N \right\| \\
      \leq & \left\|\left(-\frac{1}{2}\Delta + \langle V\rangle\right)^{-1}\Pi_N^\perp \right\| \Big(\left\|(V-\langle V \rangle)\Pi_N^\perp \right\| + \| V \Pi_N \| \Big) \\ \leq &
		\frac{1}{\Ec+\langle V\rangle} \Big(\|V-\langle V \rangle\|_{L^{\infty}} + \|V\|_{L^\infty}\Big).
	\end{split}
\end{equation}
The final inequality is a consequence of the Fourier representation of the operator \(-\frac{1}{2}\Delta + \langle V\rangle\) and the definition \eqref{eq:disc_space} of the subspace \(\cV_N\).

We now proceed to estimate the first term. To this end let us denote by \(\pi_\Ne\) the orthogonal projection on the vector space generated by the approximate eigenvectors \(\cW_\Ne={\rm Span}(\varphi_{i,N})_{i=1,\cdots,\Ne}\subset \cV_N\)
and \(\pi_\Ne^\perp\) as the corresponding orthogonal projection in \(\cV_N\). Introducing $A_N = \Pi_NA\Pi_N = \Pi_N H_0 \Pi_N$, we can estimate $\|H_{0}^{-1}\Pi_N W\|$ by
\begin{equation}
	\begin{split}
		\|H_{0}^{-1}\Pi_N W\| =&
		\|A_N^{-1}\Pi_N V\Pi_N^\perp \| \\ = &
		\|\left( \pi_\Ne A_N^{-1}\pi_\Ne  + \pi_\Ne^\perp A_N^{-1} \pi_\Ne^\perp \right)\Pi_N V\Pi_N^\perp \| \\ \leq &
		\| \pi_\Ne A_N^{-1}\pi_\Ne \Pi_N V\Pi_N^\perp\| + \| \pi_\Ne^\perp A_N^{-1} \pi_\Ne^\perp \Pi_N V\Pi_N^\perp \| \\ \leq &
		\| \pi_\Ne A_N^{-1}\pi_\Ne \Pi_NV\Pi_N^\perp\| + \| \pi_\Ne^\perp A_N^{-1} \pi_\Ne^\perp\| \| \Pi_N V\Pi_N^\perp \| \\ \leq &
		\| \pi_\Ne A_N^{-1}\pi_\Ne \Pi_NV\Pi_N^\perp\| +  \frac{\|V\|_{L^\infty}}{\varepsilon_{\Ne,N}}.
	\end{split}
\end{equation}
The first term can be further computed in terms of the residuals from the eigendecomposition \eqref{eq:eig_pb_lin_disc}. Indeed, since $\Pi_N^\perp \varphi_{i,N} = 0$, we have $\Pi_N^\perp r_{i,N} = \Pi_N^\perp A \varphi_{i,N}$ from which we can write, using in addition that $\Pi_N\Delta\Pi_N^\perp = 0$,
\begin{equation}\label{eq:opnorm}
	\begin{split}
		\| \pi_\Ne A_N^{-1}\pi_\Ne\Pi_N V\Pi_N^\perp\| = &
		\left\|\sum_{i=1}^\Ne\frac{|\varphi_{i,N}\rangle\langle \varphi_{i,N}|}{\varepsilon_{i,N}}\Pi_N V\Pi_N^\perp\right\| \\
        = &\left\|\sum_{i=1}^\Ne\frac{|\varphi_{i,N}\rangle\langle \varphi_{i,N}|}{\varepsilon_{i,N}}\Pi_N\left(-\frac12\Delta+V\right)\Pi_N^\perp\right\| \\
        = & \left\|\sum_{i=1}^\Ne\frac{|\varphi_{i,N}\rangle\langle \varphi_{i,N}|}{\varepsilon_{i,N}}\Pi_N A \Pi_N^\perp\right\|\\
        =& \left\|\sum_{i=1}^\Ne\frac{|\varphi_{i,N}\rangle\langle r_{i,N}|}{\varepsilon_{i,N}}\right\|.
	\end{split}
\end{equation}
Since the family $(\varphi_{i,N})_{i=1,\dots,\Ne}$ is $L^2$-orthonormal, the matrix
\[
    X = \sum_{i=1}^\Ne\frac{|\varphi_{i,N}\rangle\langle r_{i,N}|}{\varepsilon_{i,N}} \in \R^{N\times N}
\]
satisfies
\[
 X^* X = \sum_{i=1}^\Ne\frac{|r_{i,N}\rangle\langle r_{i,N}|}{(\varepsilon_{i,N})^2}  \in \R^{N\times N}.
\]
The operator norm $\|X\|$ in \eqref{eq:opnorm} is thus given by $\sqrt{\rho(X^*X)}$, where $\rho$ denotes the spectral radius. Finally, this norm is actually computable for a negligible computational cost (even for a large basis size $N$) as the square root of the highest eigenvalue of the smaller matrix $R^*R\in\R^{\Ne\times\Ne}$ with
\[
     \R^{N\times\Ne} \ni R = \begin{bmatrix}
        r_{1,N} & r_{2,N} & \cdots & r_{\Ne,N}
    \end{bmatrix} \times \begin{bmatrix}
        1/\varepsilon_{1,N}   &                     &        & \\
                              & 1/\varepsilon_{2,N} &        & \\
                              &                     & \ddots & \\
                              &                     &        & 1/\varepsilon_{\Ne,N}
    \end{bmatrix},
\] where the matrix on the right is diagonal.

\end{document}